\documentclass[preprint,11pt]{elsarticle}
\usepackage{graphicx} 

\usepackage[left=3cm,top=3cm,bottom=3cm,right=3cm]{geometry}

\usepackage[english]{babel}      
\usepackage[utf8]{inputenc}      
\usepackage{microtype}           
\usepackage[T1]{fontenc}         
\usepackage{lmodern}             
\usepackage{textcomp}            
\usepackage{verbatim}            
\usepackage{placeins}            
\usepackage{color}               
\usepackage{graphicx}            
\usepackage{nicefrac}            
\usepackage[hidelinks]{hyperref} 
\usepackage{todonotes}           
\usepackage{overpic}
\usepackage{wrapfig}             
\usepackage{subfigure}           
\usepackage{ragged2e}            
\usepackage{float}               
\usepackage{latexsym,exscale,stmaryrd,amssymb,amsmath}
\usepackage{mathtools}
\usepackage{bm}
\usepackage{dsfont}

\usepackage{amsthm}

\usepackage[nameinlink]{cleveref}

\usepackage{thmtools}
\usepackage{cleveref}
\usepackage{booktabs}

\declaretheoremstyle[
spaceabove=6pt, spacebelow=6pt,
headfont=\normalfont\bfseries,
notefont=\mdseries, notebraces={(}{)},
bodyfont=\normalfont,
postheadspace=\newline,
qed={}
]{mystyle}

\newtheoremstyle{break} 
  {\topsep}{\topsep}%
  {\itshape}{}%
  {\bfseries}{}%
  {\newline}{}

\theoremstyle{break}
\newtheorem{thm}{Theorem}[section]

\theoremstyle{break} 

\newtheorem*{defn*}{Definition} 

\theoremstyle{remark}
\newtheorem{rem}[thm]{Remark}

\declaretheorem[name=Theorem,refname={Theorem,Theorems},Refname={Theorem,Theorems},style=mystyle,numberlike=thm]{xthm}

\usepackage{siunitx}
\DeclareSIUnit{\skalenteil}{Skt}

\usepackage{mathrsfs} 

\usepackage{enumerate}

\usepackage{stackrel}

\renewcommand{\div}{\operatorname{div}}
\usepackage{tikz}
\usetikzlibrary{cd}

\newtheorem{theorem}{Theorem}[section]

\theoremstyle{remark}
\newtheorem{remark}[theorem]{Remark}
\theoremstyle{definition}

\newtheoremstyle{algobreak}
  {\topsep}{\topsep}%
  {\normalfont}{}
  {\bfseries}{}
  {\newline}{}

\theoremstyle{algobreak}
\newtheorem{algo}[thm]{Algorithm} \DeclareMathOperator{\rot}{\mathbf{rot}}
\DeclareMathOperator{\grad}{\mathbf{grad}}
\renewcommand{\div}{\operatorname{div}}

\newcommand{\BDM}{\mathds{BDM}}
\newcommand{\JBDM}{\mathds{J}^k_{\mathds{BDM}}}
\newcommand{\HBDM}{\mathds{H}^k_{\mathds{BDM}}}
\newcommand{\FacetSp}{\bm{\Lambda}_h^k}
\newcommand{\SFacetSp}{\Lambda_h^k}

\newcommand{\R}{\mathbb{R}}

\newcommand{\E}{\mathcal{E}}

\newcommand{\Th}{\mathcal{T}_h}

\newcommand{\Eh}{\mathcal{E}_h}

\renewcommand{\div}{\operatorname{div}}
\newcommand{\addiv}{\mathbf{div}^*}
\newcommand{\curl}{\operatorname{curl}}

\newcommand{\scp}[2]{\left( #1,#2\right)}
\newcommand{\norm}[1]{\left\|#1\right\|}

\newcommand{\nv}{\bm{\nu}}
\newcommand{\tv}{\bm{\tau}}
\newcommand{\TM}{\mathbf{T}M}
\newcommand{\tend}{T}

\newcommand{\dirder}[1]{\partial_{#1} }

\newcommand{\Hn}{\mathbf{H}_N(M)}

\newcommand{\jump}[1]{[\![ #1 ]\!]}
\newcommand{\avg}[1]{\{\!\!\{#1\}\!\!\}}

\renewcommand{\u}{\bm{u}}
\renewcommand{\v}{\bm{v}}
\newcommand{\w}{\bm{w}}

\newcommand{\urot}{\bm{u}_{\scalebox{0.6}{$\rot$}}}
\newcommand{\uharm}{\bm{u}_{\scalebox{0.6}{$\mathds{H}$}}}

\DeclareMathOperator{\ran}{ran}

\newcommand{\dofs}{\texttt{dofs}}

\newcommand{\osum}{\oplus_{\scalebox{0.6}{$L^2$}}}

\begin{document}

\begin{frontmatter}

\title{Releasing the pressure: High-order surface flow discretizations via discrete Helmholtz--Hodge decompositions}

\date{\today}

\author[1]{Tim Brüers}
\ead{t.brueers@math.uni-goettingen.de}
\author[1]{Christoph Lehrenfeld}
\ead{lehrenfeld@math.uni-goettingen.de}
\author[1]{Tim van Beeck}
\ead{t.beeck@math.uni-goettingen.de}
\author[1]{Max Wardetzky}
\ead{wardetzky@math.uni-goettingen.de}

\affiliation[1]{organization={Institute for Numerical and Applied Mathematics, University of G\"ottingen},
            addressline={Lotzestr. 16-18},
            city={G\"ottingen},
            postcode={37083},
            country={Germany}}

\begin{abstract}
We present a discrete Helmholtz--Hodge decomposition for $H(\div)$-conforming Brezzi--Douglas--Marini (BDM) finite elements on triangulated surfaces of arbitrary topology.
The divergence-free BDM subspace is split $L^2$-orthogonally into rotated gradients of a continuous streamfunction space and a finite-dimensional space of discrete harmonic fields whose dimension equals the first Betti number of the surface.
Consequently, any incompressible flow discretized on
this subspace can be reformulated with a scalar streamfunction and finitely many harmonic
coefficients as the only unknowns.
This eliminates the pressure and the saddle-point structure while ensuring exact tangentiality, pointwise divergence-freeness, and pressure-robustness.
We present a randomized algorithm for constructing the harmonic basis and discuss implementation aspects including hybridization, efficient treatment of the harmonic unknowns, and pressure reconstruction.
Numerical experiments for unsteady surface Navier--Stokes equations on a trefoil knot and a multiply-connected sculpture surface demonstrate the method and illustrate the physical role of the harmonic velocity component.
\end{abstract}

\begin{keyword}
Surface Navier--Stokes \sep Helmholtz--Hodge decomposition \sep streamfunction \sep harmonic fields \sep BDM finite elements \sep divergence-free discretization
\end{keyword}

\end{frontmatter}

\section{Introduction}\label{sec:intro}
Incompressible flows on curved surfaces arise in thin-film dynamics, in the modeling of biomembranes, and in geophysical applications. By \emph{surfaces} we here refer to orientable, compact, two-dimensional Riemannian manifolds, possibly with boundary. Incompressible flows on such surfaces are characterized by two essential geometric constraints: the velocity must be tangential to the surface and divergence-free.
Numerically, both of these conditions can be satisfied using $\mathbf{H}(\div)$-conforming Brezzi--Douglas--Marini ($\BDM$) finite elements, which yield pointwise divergence-free velocities,
pressure-robustness~\cite{linke2014role,LehrenfeldSchoeberl2016}, and energy
stability~\cite{LedererLehrenfeldSchoeberl2020}. 
Nonetheless, the resulting formulation still requires solving a velocity-pressure saddle-point system.

The key observation of this work is that such a saddle-point approach can be avoided when using a discrete analogue of the smooth $L^{2}$‑orthogonal Helmholtz--Hodge
decomposition of divergence-free fields into a streamfunction rotation and a harmonic
field~\cite{StreamVortForm}. 
Indeed, this decomposition has a direct discrete counterpart for the space $\BDM^k$, the $\BDM$ space of degree $k$. 
On a surface $M$ with first Betti number $b_{1}(M)$, the discrete divergence-free subspace $\JBDM \subset \BDM^k$ splits
\begin{align}\label{eq:intro:HHD}
    \JBDM = \rot(\mathbb{S}^{k+1}_0)\;\osum\;\HBDM,
\end{align}
where $\mathbb{S}^{k+1}_0$ is the continuous Lagrange finite element space of degree $k+1$, referred to as the streamfunction space, and $\HBDM$ is a $b_{1}(M)$-dimensional space of discrete harmonic fields.

This decomposition accounts for the topology of the underlying surface: harmonic fields capture possible non-contractible circulations that cannot be represented by any streamfunction. Indeed, while for simply connected surfaces the velocity field perfectly aligns with stream function contours, 
%
the harmonic component carries dynamically relevant information for surfaces of higher genus, such as the net flow through non-contractible loops --- see \Cref{fig:streamlinesnotrots}.

\begin{figure}[!htbp]
    \centering
    \vspace*{-0.05cm}
    \includegraphics[width=0.8\textwidth]{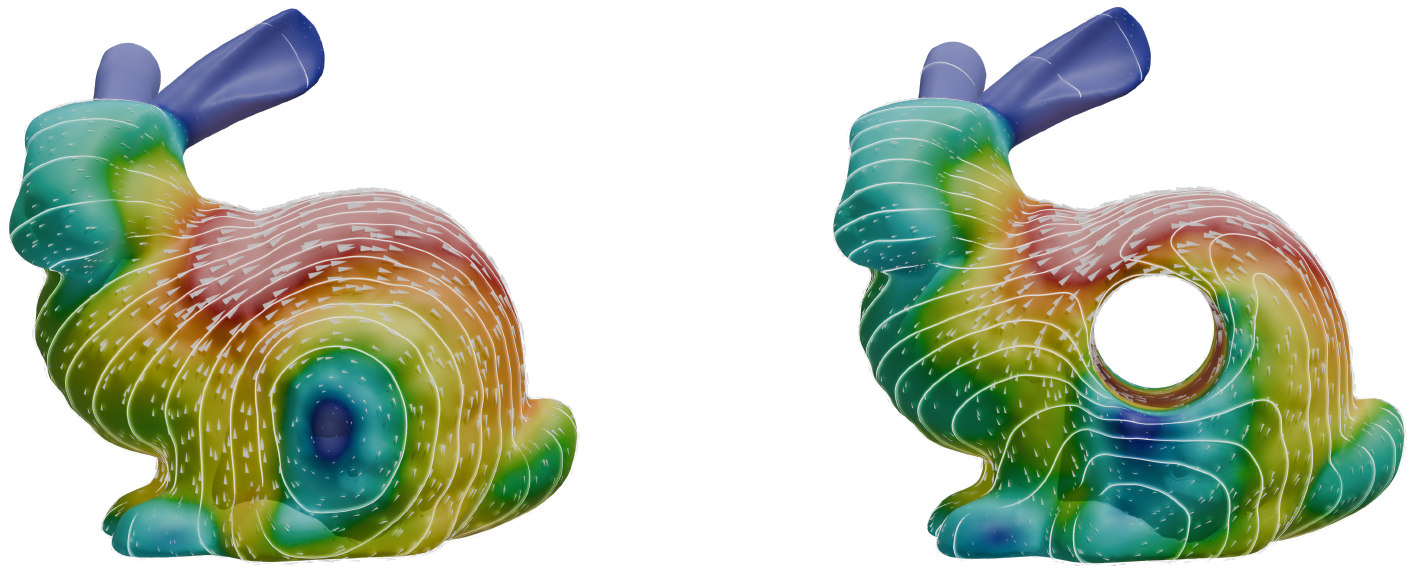}
    \vspace*{-0.45cm}
    \caption{Comparison of the standard and ``hol(e)y'' Stanford Bunny. Both surfaces depict a divergence-free vector field alongside contours of the corresponding scalar streamfunction. 
    The left bunny is simply connected, and thus no harmonic fields are present. Consequently, the vector field aligns perfectly with the contours of the streamfunction. In contrast, the right bunny has the topology of a torus, and thus the harmonic space has dimension $b_1(M)=2$. Here, the vector field contains a non-trivial harmonic component and therefore clearly deviates from the contours of the streamfunction.}
    \label{fig:streamlinesnotrots}
\end{figure}

Using the decomposition \eqref{eq:intro:HHD}, we can reformulate any incompressible flow problem on $\JBDM$ with the streamfunction $\psi_h \in \mathbb{S}^{k+1}_0$ and $b_1(M)$ harmonic coefficients (that determine a unique harmonic component $\mathbf{h}_h \in \HBDM$) as the only unknowns.
Using this approach, velocity is recovered exactly via $\u_h = \rot(\psi_h) + \bm h_h$ -- and inherits all structural properties of the $\mathbf{H}(\div)$-conforming framework, namely, exact tangentiality, pointwise divergence-freeness, and pressure-robustness (up to geometry and quadrature errors, cf. \cite[Sec. 4.2]{LedererLehrenfeldSchoeberl2020}), without solving a saddle-point problem.

In the special case of a simply connected flat domain, the harmonic space vanishes and the approach reduces to a discrete formulation of the biharmonic equation; see, e.g.~\cite{KanschatSharma2014}.
The general surface case treated here requires additional ingredients: a construction of the harmonic basis within $\JBDM$, and a practical realization of the splitting \eqref{eq:intro:HHD} within existing finite element frameworks. We address both aspects in detail. 

Our main contribution is to show that the discrete velocity space $\BDM$ itself inherits a discrete Helmholtz--Hodge structure that precisely mirrors the continuous one.
This allows us to obtain a structure-preserving method that retains all geometric exactness properties of the high-order $\BDM$ approach, i.e., exact tangentiality, pointwise divergence-freeness, and pressure-robustness, while simultaneously avoiding the velocity-pressure saddle-point system entirely.

\paragraph{Related work}
Finite element methods for incompressible surface flows have received increasing attention in recent years and have been developed along several lines. 
Constructing $\mathbf{H}^1$-conforming tangential vector fields on surfaces is not straightforward. Therefore, approaches based on $\mathbf{H}^1$-conforming elements are  mostly based on $[H^1]^3$-conforming elements and enforce tangentiality weakly. We refer to \cite{Reuther2018,Fries2018,Hansbo2020} for surface finite element methods or \cite{Gross2018,brandner2022finite} for similar approaches for TraceFEM. 

In contrast, $\mathbf{H}(\div)$-conforming approaches on surfaces achieve exact tangentiality via the Piola transformation and were developed for example in \cite{bonito2020divergence,LedererLehrenfeldSchoeberl2020}.  
%
The recent works by Demlow and Neilan~\cite{demlowneilan2024,demlowneilan2025} and Kone, Neilan, and Polling \cite{KNP26} also use Piola maps and result 
in $\mathbf{H}^1$-non-conforming but penalty-free methods with tangential velocities.
In \cite{john2025divergencefreedecoupledfiniteelement}, John, Li and Merdon recently presented an approach decoupling velocity from pressure for a Raviart--Thomas-stabilized Scott--Vogelius formulation on flat domains, while Nochetto and Shakipov~\cite{nochetto2025surfacestokesinfsupcondition} reformulated the Surface Stokes problem into a saddle-point-free coupled problem.

Streamfunction (and vorticity) formulations provide a natural alternative that avoids the pressure variable.
In the flat case, divergence-free $\BDM$ subspaces have been constructed via streamfunctions in \cite{WWY09,mu2018discrete}. 
Furthermore, the equivalence of $\BDM$ interior penalty (IP) discontinuous Galerkin (DG) methods with $C^0$-IP for biharmonic discretizations was established in \cite{KanschatSharma2014}; see also \cite{BrennerSung2005,BrennerNeilan2011,EGHLM02} for $C^0$-IP methods and \cite{DE24C0HHO} for a hybridized variant.
On surfaces, streamfunction formulations for the Stokes problem have been studied in (among others) in~\cite{reusken2020stream,brandner2022finite,NW25} but only for simply connected surfaces, where the absence of harmonic fields simplifies the theory considerably. The comprehensive continuous theory for streamfunction-vorticity formulations on general surfaces, including topologically non-trivial cases, is developed in \cite{StreamVortForm}, proving equivalence with the velocity formulation for both Navier--Stokes and Euler equations.
Discrete exterior calculus approaches to surface Navier--Stokes that handle harmonic fields appear in~\cite{Nitschke2017}.
The decomposition \eqref{eq:intro:HHD} fits naturally into the framework of finite element exterior calculus \cite{AFW06,AFW10}, and the role of incomplete discrete Helmholtz--Hodge decompositions has been analyzed in \cite{BringmannKettelerSchedensack2023}.
The fluid cohomology framework of Yin et al.~\cite{YNWWC23} and its viscous extension~\cite{ZhuYinChern2025} provide a discrete Helmholtz--Hodge decomposition for surface flows from a geometry processing perspective.

%

\paragraph{Outline}
After introducing notation, function spaces, and model problems in \Cref{sec:prelim}, we develop the discrete Helmholtz--Hodge decomposition for $\BDM$ elements on general surface triangulations in \Cref{sec:harmonic}. As a main result, we establish the splitting \eqref{eq:intro:HHD} with $\dim(\HBDM) = b_1(M)$, showing that the discrete chain complex preserves the cohomology of the continuous one. We also discuss discrepancies between the continuous and discrete formulations.
We then outline implementation details in~\Cref{sec:ref-hdiv-dg}, showing how to compute an orthonormal basis of $\HBDM$ using a randomized algorithm and how to formulate the Stokes and Navier--Stokes equations directly in the divergence-free subspace $\JBDM$.
\Cref{sec:implementation} covers further implementation aspects, including hybridization, a Schur complement approach to treat the harmonic unknowns, and pressure reconstruction via post-processing.
Finally, \Cref{sec:numerics} presents numerical experiments for the unsteady surface Navier--Stokes equations on topologically non-trivial surfaces, illustrating the physical role of the harmonic velocity component. We provide supplementary material in the appendices.
 
\section{Preliminaries and notation}\label{sec:prelim}
This section fixes notation for surfaces, meshes, basic polynomial spaces, and surface differential operators. We also state the considered model problems on surfaces.

\subsection{Geometric setup and function spaces}

Let $M \subset \mathbb{R}^3$ be a smooth, oriented, compact, two-dimensional Riemannian manifold, possibly possessing a $C^{0,1}$ boundary $\Gamma$ (as detailed in \cite[App A]{StreamVortForm}) with tangential bundle $\TM$.
We denote by $\bm{\nu}_\Gamma$ the (in-plane) unique outward-pointing unit normal vector on $\Gamma = \partial M$.

On this surface, we consider the standard surface gradient $\grad$ and divergence $\div$. 
Denoting by $J$ a rotation by $90^\circ$, we define the differential operators
\begin{equation*}
    \rot \coloneqq -J\grad \quad \text{and} \quad \curl \coloneqq -\div J.
\end{equation*}
Further, we set $\underline{\nabla}$ as the covariant derivative and the directional derivative in direction $\v$ of a (tangential) smooth vector field $\u$ as
$\dirder{\v} \u \coloneqq (\underline{\nabla} \u ) \cdot \v$ as well as $\nabla^2 = \underline{\nabla} \grad$.
We use standard notation for scalar and vector-valued \emph{tangential} Sobolev spaces $H^m(M)$ and $\mathbf{H}^m(M)$, respectively. 
Similarly, we denote the corresponding subspaces with vanishing trace on the boundary $\Gamma=\partial M$ by $H^m_0(M)$ and $\mathbf{H}^m_0(M)$ for $m \ge 1$. 
In the case of a closed surface where $\Gamma = \emptyset$, we conventionally define for the space $H^m_0(M)$ as the subspace of functions in $H^m(M)$ with zero integral mean. In both cases, we denote by $L^2_0(M)$ the corresponding subspace of functions in $L^2(M)$ with zero integral mean.
Of particular interest for fluid dynamics is the space $\mathbf{H}(\div, M)$, which consists of all vector fields in $\mathbf{L}^2(M)$ whose weak surface divergence belongs to $L^2(M)$. From this space, we define the following divergence-free subspace:
\begin{align*}
    \mathbf{J}_{L^2}(M)&\coloneqq\{\v\in \mathbf{H}(\div,M)\vert~\div(\v)=0~\mathrm{and}~\v\cdot\bm{\nu}_{\Gamma}=0 \text{ in } H^{-\frac12}(\Gamma)\} \subset \mathbf{H}(\div,M).
\end{align*}
We note that $\mathbf{J}_{L^2}(M)$ and its subspaces play a fundamental role in the variational formulation of fluid problems and are thus collectively referred to as velocity spaces.

\begin{figure}[!htbp]
    \centering
    \includegraphics[width=0.8\textwidth]{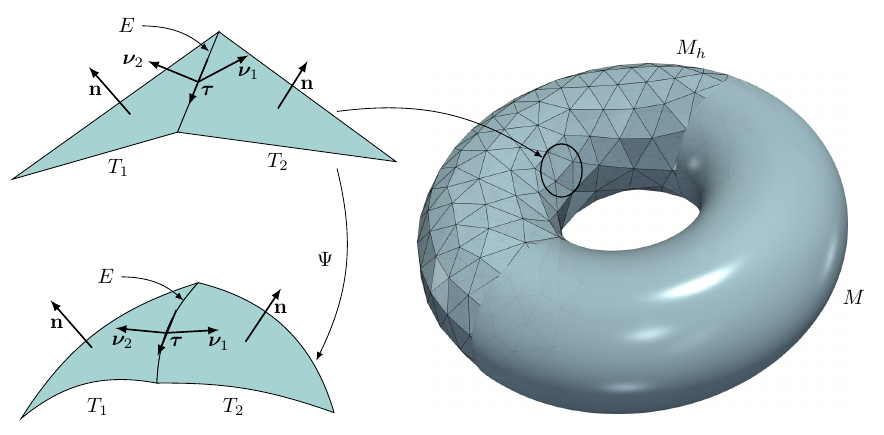}
    \vspace*{-0.225cm}
    \caption{We approximate a surface $M \subset \mathbb{R}^3$ by a discrete surface $M_h$. The approximation of the geometry may be linear (above left) or of higher order (below left) where the latter is typically obtained from the former by a piecewise smooth map $\Psi$.}
    \label{fig:sketchofedgepatch}
\end{figure}

\subsection{Discrete surfaces}\label{subsec:triangulation}
Let $\Th$ be an admissible shape-regular triangulation of the surface $M$ and $\E_h$ the set of all edges. The triangles may be curved with geometric approximation order typically matching the finite element polynomial order. Given a triangulation of closed triangles $T \in \Th$ , we denote by $M_h \coloneqq \cup_{T \in \Th} T$ 
the associated piecewise smooth discrete surface with boundary $\Gamma_h \coloneqq \partial M_h$, carrying the induced metric of the ambient space $\mathbb{R}^3$.
For each element $T \in \Th$, we denote by $\bm{n}\vert_{T}$ the oriented unit normal field on $M_h$. 
Elementwise, this allows us to rewrite $J$, the rotation by $90^\circ$ on $M_h$, as $J \cdot = \bm{n} \times \cdot$.
For any two neighboring elements $T_1,T_2 \in \Th$ with shared edge $E \coloneqq T_1 \cap T_2$, we consider an oriented unit vector field $\bm{\tau}$ along $E$ and define the in-plane unit normal vector fields 
\begin{align}
    \nv_1 \coloneqq \bm{n} \vert_{T_1} \times \bm{\tau}, \qquad \nv_2 \coloneqq - \bm{n} \vert_{T_2} \times \bm{\tau}.
\end{align}
This geometric setup is visualized in \Cref{fig:sketchofedgepatch}. On the edge $E$, we define for a matrix-valued map $\underline{\bm{\sigma}}$ the average on $E$ and for a vector-valued map $\u$ the jump on $E$ as 
\begin{align}
    \avg{\underline{\bm{\sigma}} \nv} \coloneqq \frac{1}{2} \left( \underline{\bm{\sigma}} \vert_{T_1} \nv_1 - \underline{\bm{\sigma}} \vert_{T_2} \nv_2 \right), \qquad  \jump{\u} \coloneqq \jump{\u}_{\tv} \tv + \jump{\u}_{\nv} \overline{\nv}, \qquad \overline{\nv} \coloneqq \frac{1}{2} \left( \nv_1 + \nv_2 \right), 
\end{align}
respectively, where the normal and the tangential jump are defined as 
\begin{align}
    \jump{\u}_{\nv} \coloneqq \u \vert_{T_1} \cdot \nv_1 + \u \vert_{T_2} \cdot \nv_2, \qquad \jump{\u}_{\tv} \coloneqq \left( \u \vert_{T_1} - \u \vert_{T_2} \right) \cdot \tv.
\end{align}
On the surface boundary $\Gamma_h$, we set $\jump{\u}_{\bm{\nu}} \coloneqq \u \cdot \bm{\nu}_\Gamma$, $\jump{\u}_{\bm{\tau}} \coloneqq \u \cdot \bm{\tau}$ and $\avg{\underline{\bm{\sigma}}\bm{\nu}} \coloneqq \underline{\bm{\sigma}} \bm{\nu}_{\Gamma}$.
Notice that these definitions  solely rely on an element-local point of view, such that the flux averages $\avg{\underline{\bm{\sigma}} \nv}$ and normal jumps $\jump{\u}_{\nv}$ make sense even on edges across which the normal $\bm{n}$ is discontinuous.

To every element $T \in \Th$ we associate 
scalar and vectorial polynomial spaces through mappings $\Phi_T$ to an associated flat triangle $\hat{T} \subset \mathbb{R}^2$ so that $T = \Phi_T(\hat{T})$.
Let $\hat{\mathbb{P}}^k(\hat{T})$ be the space of polynomials up to degree $k$ on $\hat{T}$. We define the mapped scalar polynomial space as
\begin{subequations}\label{eq:polynomialSpaces}
    \begin{align}
    \mathbb{P}^k(T) &\coloneqq \{ \hat{v} \circ \Phi_T^{-1} \mid\, \hat{v} \in \hat{\mathbb{P}}^k(\hat{T}) \}. \label{eq:ScalarpolynomialSpaces}
    \intertext{
        Similarly, we define $\mathbb{P}^k(E)$ as the mapped polynomial space on an edge $E\in \Eh$.
        To define vector polynomials, we apply the Piola transformation to ensure tangentiality:
    }
    \mathds{P}^k(T) &\coloneqq \{ \mathcal{J}_T^{-1} (D \Phi_T \hat{\v}) \circ \Phi_T^{-1} \mid\, \hat{\v} \in [\hat{\mathbb{P}}^k(\hat{T})]^2 \}. 
    \label{eq:piola}
    \intertext{
        Here, $\mathcal{J}_T \coloneqq (\text{det}(D \Phi_T^T D \Phi_T))^{1/2}$ denotes the ratio of (surface) measures 
        between $T$ and $\hat{T}$.
        Correspondingly, we define $\mathbb{P}^k=\mathbb{P}^k(\Th)$ and $\mathds{P}^k=\mathds{P}^k(\Th)$ as the piecewise mapped polynomial spaces of degree $k$, discontinuous across element boundaries.
        We further define
    }
    \mathbb{P}_{\!\mathcal{J}}^k(T) &\coloneqq \{ \mathcal{J}_T^{-1} \hat{v} \circ \Phi_T^{-1} \mid\, \hat{v} \in \hat{\mathbb{P}}^k(\hat{T}) \} ,
    \label{eq:pressurep}
    \end{align}
\end{subequations}
which only differs from $\mathbb{P}^k(T)$ for curved elements. Further, we have $\div \mathds{P}^k(T) \subset \mathbb{P}_{\!\mathcal{J}}^{k-1}(T)$ \cite[Lem.~3.59]{Monk03}.
Finally, by $\Pi_{S}$ we denote the $L^2$-projection into the generic function space $S$. 

\subsection{Model problems on surfaces}\label{subsec:ModelProblems}
We are interested in flows on surfaces. As two prototypical examples, we consider the surface Stokes and surface Navier--Stokes equations. 

\paragraph{Surface Stokes} Given $\bm f: M \to \TM$, find $(\u,p) : M \to \TM \times \mathbb{R}$ such that
\begin{equation}\label{eq:surfaceStokes}
    \begin{cases}
    -  \mu\, \mathbf{div}(\underline{\bm{\epsilon}}(\u)) + \grad p \ =\ \bm f,\\
    \div \u \ =\ 0,\\
    \end{cases}\qquad\text{on }M
\end{equation}
possibly complemented by boundary conditions if $\Gamma \neq \emptyset$. Here, $\mu>0$ is the kinematic viscosity, $\underline{\bm{\epsilon}}(\u) = \frac12 ( \underline{\nabla} \u + \underline{\nabla} \u^T)$ is the linear strain tensor, and $\mathbf{div}(\cdot)$ is the surface divergence of tensor fields mapped into $\TM$.

\paragraph{Unsteady surface Navier--Stokes} Given $\u_0: M \to \TM$ and $\bm f:M \times (0,\tend]\to \TM$, find $(\u,p): M \times (0,\tend] \to \TM \times \mathbb{R}$ such that
\begin{equation}\label{eq:surfaceNavierStokes}
    \begin{cases}
    \partial_t \u + \dirder{\u} \u 
    - \mu\, \mathbf{div}(\underline{\bm{\epsilon}}(\u)) + \grad p \ =\ \bm f,\\
    \div \u \ =\ 0,
    \end{cases}
    \qquad \text{on }M\times (0,\tend].
\end{equation}
with initial condition $\u(\cdot,0) \ =\ \u_0$ and possible boundary conditions if $\Gamma \neq \emptyset$. Note that for $\mu=0$ we obtain the incompressible Euler equations.
 \section{Helmholtz--Hodge decomposition of divergence-free fields}\label{sec:harmonic}

This section develops the theoretical framework for the Helmholtz--Hodge decomposition of divergence-free vector fields on smooth and piecewise-smooth surfaces. We first recall the smooth Helmholtz--Hodge decomposition and its interpretation as a Hilbert complex. We then introduce a discrete analogue based on the Brezzi--Douglas--Marini ($\BDM$) finite element complex. 
Finally, we discuss how to characterize the orthogonal complement of divergence-free functions as a gradient space; and we deal with the lack of $\mathbf{H}^1$ conformity.

\subsection{Differential chain complexes}\label{sec:VelocityComplexes}

The classical $L^2$ de Rham chain complex on a smooth manifold $M$ is given by
\begin{equation}\label{eq:L2complex}
    \begin{tikzcd}
        L^2(M) \arrow[r,"\rot"] & \mathbf{L}^2(M) \arrow[r,"\div"] & L^2(M),
    \end{tikzcd}
\end{equation}
where the involved (unbounded) differential operators are densely defined, closed, and have closed range. In order to obtain bounded operators, we restrict these operators to their respective domains, yielding the associated \emph{domain complex} \cite{A18FEEC} of the $L^2$ de Rham complex
\begin{equation}\label{eq:Domaincomplex}
    \begin{tikzcd}[row sep=-0.3em]
        H^1(M) \arrow[r,"\rot"] & \mathbf{H}(\div,M) \arrow[r,"\div"] & L^2(M). \\
        \scalebox{0.75}{$\subset L^2(M)$} &  \scalebox{0.75}{$\subset \mathbf{L}^2(M)$} &
    \end{tikzcd}
\end{equation}
The Helmholtz--Hodge decomposition admits several variants depending on the chosen spaces. 
To derive the specific decomposition considered below in \Cref{eq:HkDecomp}, we augment \eqref{eq:Domaincomplex} to account for boundary conditions and additional regularity, obtaining
\begin{equation}\label{eq:H2complex}
    \begin{tikzcd}[row sep=-0.3em]
        H^2(M)\cap H^1_0(M) \arrow[r,"\rot"] & \mathbf{H}^1(M)\cap \mathbf{H}_0(\div,M) \arrow[r,"\div"] & L^2_0(M).
        \\
        \scalebox{0.75}{$\subset L^2(M)$} &  \scalebox{0.75}{$\subset \mathbf{L}^2(M)$} &
    \end{tikzcd}
\end{equation}

The sequences \eqref{eq:L2complex}, \eqref{eq:Domaincomplex}, and \eqref{eq:H2complex} are \textit{chain complexes}, i.e., 
$\div\circ \rot = 0$, and thus $\ran(\rot) \subset \ker(\div)$.
From a broader perspective, we can consider these chain complexes within the functional analytic theory of Hilbert chain complexes \cite{AFW06,AFW10,A18FEEC}, cf.~also \ref{sec:HilbertComplexes}. 

The structure of the $L^2$-chain complex \eqref{eq:L2complex} induces a Helmholtz--Hodge decomposition, which carries over to the chain complex \eqref{eq:H2complex}.
Provided that $M$ possesses a $C^{1,1}$ boundary, one obtains the following unique $L^2$-orthogonal decomposition into the $\grad$ of a \textit{potential space}, the $\rot$ of a \textit{streamfunction space}, and the space of \textit{harmonic fields}: 
\begin{align}\label{eq:HkDecomp}
    \mathbf{H}^1(M)&=\grad(H^{2}(M)\cap L^2_0(M)) \osum \rot(H^{2}(M)\cap H^1_0(M)) \osum \Hn, 
\end{align}
where the space of \textit{harmonic fields} defined as 
\begin{align}\label{eq:harmonic}
    \Hn \coloneqq \{\v\in \mathbf{J}_{L^2}(M)\cap \mathbf{H}(\mathrm{curl},M) \mid \mathrm{curl}(\v)=0\}
\end{align}
is finite-dimensional, and its dimension is equal to the first Betti number $b_1(M)$. The proof of \eqref{eq:HkDecomp} is given in \cite[Lem.~3.10]{StreamVortForm}, where it was also shown that $\Hn \hookrightarrow \mathbf{H}^1(M)$, cf. \cite[Sec. 3.2]{StreamVortForm}. \Cref{fig:splitting0} illustrates this decomposition. 


\begin{figure}[!htbp]
    \centering
    \vspace*{-0.2cm}
    \includegraphics[width=0.99\textwidth]{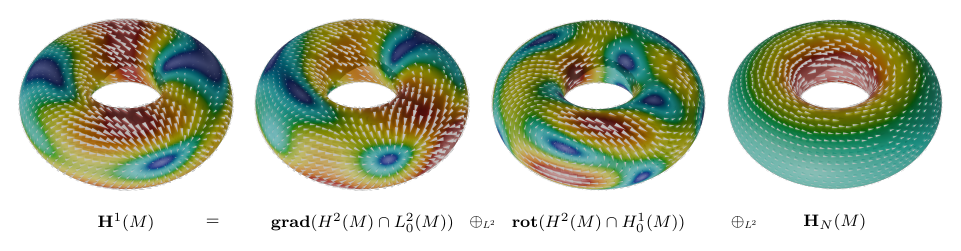}
    \vspace*{-0.6cm}
    \caption{Decomposition \eqref{eq:HkDecomp} of an $\mathbf{H}^1$ vector field into a gradient of a potential, rotation of a streamfunction, and a harmonic field (from left to right) on a torus.} 
    \label{fig:splitting0}
\end{figure}


For the divergence-free subspace, the gradient contributions vanish and the decomposition \eqref{eq:HkDecomp} yields
\begin{align}\label{eq:HkDecomp2}
    \mathbf{H}^1(M)\cap \mathbf{J}_{L^2}(M) &= \rot(H^{2}(M)\cap H^1_0(M)) \osum \Hn.
\end{align}
%

%

Our goal is to transfer this structure to the discrete surface $M_h$.
Theoretically, it is possible to define a corresponding $H^2(M_h)$ complex, but constructing finite element subspaces for both $H^2(M_h)$ and $\mathbf{H}^1(M_h)$ is practically infeasible.
%
To circumvent these difficulties, we return to the domain complex \eqref{eq:Domaincomplex} with appropriate boundary conditions. 
This reduces the regularity assumptions on the surface $M$, allowing us to work directly with piecewise smooth surfaces and to consider the chain complex directly on the discrete surface $M_h$:
\begin{equation}\label{eq:H1MhComplex}
    \begin{tikzcd}
        H^1_0(M) \arrow[r,"\rot"] \arrow[d,"\iota_h^1"] & \mathbf{H}_0(\div,M) \arrow[r,"\div"] \arrow[d,"\iota_h^2"] & L^2_0(M) \arrow[d,"\iota_h^3"]\\
        H^1_0(M_h) \arrow[r,"\rot"] & \mathbf{H}_0(\div,M_h) \arrow[r,"\div"] & L^2_0(M_h)
    \end{tikzcd}
\end{equation}
Here, the maps $\iota_h^i$ denote quasi-isomorphisms between the corresponding spaces.
The discrete surface $M_h$ provides enough regularity to define $\mathbf{H}^1$-vector fields, which suffices to make sense of the model problems considered in \Cref{subsec:ModelProblems}. However, discretizations based on this chain complex produce discrete velocities that are only $\mathbf{H}(\div,M_h)$-conforming, which may be insufficient for problems with continuous solutions in $\mathbf{H}^1$. Discretization strategies to address this issue are discussed in \Cref{ssec:nonconformity}.

The chain complex \eqref{eq:H1MhComplex} induces the following unique $L^2$-orthogonal decomposition for $M$
\begin{align*}
    \mathbf{L}^2=\grad(H^1(M)\cap L^2_0(M))\osum \rot(H^1_0(M))\osum \Hn,
\end{align*}
which for the velocity space implies that $\mathbf{J}_{L^2}=\rot(H^1_0(M))\osum \Hn$.
To approximate the chain complex, we seek a finite element sub-complex of the form
\begin{equation}\label{eq:H1complex} 
    \centering
    \begin{tikzcd}
        \hphantom{(\theequation a)}&& H^1_0(M_h) \arrow[r,"\rot"] \arrow[d,"\pi_h^1"] & \mathbf{H}_0(\div,M_h) \arrow[r,"\div"] \arrow[d,"\pi_h^2"] & L^2_0(M_h) \arrow[d,"\pi_h^3"] && (\theequation \mathrm{a}) \\
        \hphantom{(\theequation b)}&& \mathbb{S}\arrow[r,"\rot"] & \mathds{V} \arrow[r,"\div"] & \mathbb{Q} && (\theequation \mathrm{b})
    \end{tikzcd}
\end{equation}
where $\pi_h^i$ are suitable --- in the sense of Hilbert complexes, cf. \ref{sec:HilbertComplexes} --- bounded projection operators that turn \eqref{eq:H1complex} into a commutative diagram, and where $\mathbb{S}\subset H^1_0(M_h)$, $\mathds{V}\subset \mathbf{H}_0(\div,M_h)$, $\mathbb{Q}\subset L^2_0(M_h)$. 

The resulting discrete chain complex preserves the cohomology of the continuous chain complex, which is one of the central principles in finite element exterior calculus; see, for example, \cite{BL92,AFW06,AFW10,HS12,A18FEEC,H25} and \ref{sec:HilbertComplexes}. 
Most importantly, 
the space of discrete harmonic fields $\mathds{H}$, defined as the $L^2(M_h)$-orthogonal complement of $\rot( \mathbb{S})$ in $\mathds{J}\coloneqq \mathds{V}\cap\mathrm{ker}(\div)$, has the correct dimension, i.e., $\mathrm{dim}(\mathds{H})=b_1(M)$.
In other words, the discrete sub-complex correctly captures the topological structure of the harmonic fields. 





\begin{rem}[Mixed boundary conditions]
    The framework presented here extends naturally to the case of mixed boundary conditions, where the no-penetration constraint $\mathbf{u}\cdot\bm{\nu}_{\Gamma}=0$ 
    is imposed only on a subset of the boundary.
    The existence of a corresponding Helmholtz--Hodge decomposition, together with the characterization of the appropriate streamfunction, potential, and harmonic subspaces in this setting, is established in \cite[Thm 4.3]{gol2011hodge}.
\end{rem}

\subsection{$\BDM$ realization of the Helmholtz--Hodge decomposition}
\label{sec:bdmrealization}

In order to realize a compatible finite element sub-complex, we can consider the $\BDM$ finite element space \cite{BBF13}, which we define for $k\geq 0$ as 
\begin{align}
    \BDM^k \coloneqq \mathds{P}^k \cap \mathbf{H}(\div,M_h), \qquad \BDM_{0}^k \coloneqq \{\mathbf{v}_h\in \BDM^k \mid \mathbf{v}_h\cdot\bm{\nu}_{\Gamma_h}=0\}.
\end{align}
This space consists of (mapped) vectorial tangential polynomials that are normal-continuous in the sense that the normal jump $\jump{\cdot}_{\nu}$ vanishes. 
While the $\BDM$ space is typically defined for $k\geq 1$, we also allow for the lowest-order case $k=0$. 
To complement this vector-valued space, we define the scalar space $\mathbb{S}^{k+1}_0 \coloneqq \mathbb{P}^{k+1} \cap H^1_0(M_h)$ as the space of continuous Lagrange finite elements of degree $k+1$ with vanishing boundary traces or, in the case of $\Gamma=\emptyset$, zero integral mean. 
Letting $\Phi_T$ denote the mapping from the reference element $\hat{T}$ to $T \in \mathcal{T}_h$ and $\phi_h \in \mathbb{P}^k(T)$, cf. \eqref{eq:polynomialSpaces}, elementary calculations yield the relation
$\rot( \phi_h ) = \mathcal{J}_T^{-1} (D\Phi_T  \hat{\rot}(\hat{\phi}_h)) \circ \Phi_T^{-1}$ 
with 
$\hat{\phi}_h = \phi_h \circ \Phi_T \in \hat{\mathbb{P}}^k(\hat{T})$. 
I.e., the $\rot$ of polynomials transforms using the exact same Piola transformation as in the definition of $\mathds{P}^k(T)$, cf. \eqref{eq:piola}. Adding continuity we find $\rot \mathbb{S}^{k+1}_0 \subset \BDM^k_0$, even on curved surface triangulations. 
Further, we recall $\div \BDM^k_0 \subset \mathbb{P}_{\!\mathcal{J}}^{k-1}$, cf. \eqref{eq:pressurep}.

The choices $\mathbb{S} = \mathbb{S}^{k+1}_0$, $\mathds{V} = \BDM^k_0$, and $\mathbb{Q} = \mathbb{P}_{\!\mathcal{J}}^{k-1}$ in (\ref{eq:H1complex}b) give rise to a discrete chain complex.
As above, we define the spaces of discretely divergence-free velocities and discrete harmonic fields by
\begin{align}
    \JBDM \coloneqq \BDM^k_0 \cap \ker(\div), \qquad 
    \HBDM \coloneqq \bigl(\rot(\mathbb{S}^{k+1}_0)\bigr)^\perp \subset \JBDM,
\end{align}
respectively, where orthogonality is understood with respect to the $L^2(M_h)$ inner product in $\JBDM$.

Since the Raviart--Thomas space $\mathds{RT}_0^k$ differs from $\BDM_0^k$ only by the inclusion of internal, non-divergence-free bubble functions \cite{BBF13}, the corresponding divergence-free subspaces coincide, i.e.,
\[
\mathds{RT}_0^k \cap \ker(\div) = \JBDM, \qquad k \geq 0.
\]
Consequently, both spaces share the same cohomological structure, which justifies our restricting attention to the $\BDM$ complex.

In order to show that the finite element sub-complex correctly preserves cohomological structure, we could construct suitable cochain projections $\pi_h^i$ and apply the framework of Hilbert complexes. However, these constructions are usually delicate, because one needs to circumvent the issue that standard interpolation operators are only defined with additional regularity assumptions; see, for instance \cite{BBF13}. Projections with only minimal regularity requirements have been developed for the flat case in \cite{Christiansen07,S01} and only recently for surfaces \cite{licht2023}. 

Instead, we use a straightforward dimension-counting argument (cf.~\ref{thm:countBDM}) to obtain $\dim(\HBDM) = b_1(M) = \dim(\Hn)$. As a consequence, our approach correctly captures first cohomology, which is the only non-trivial one for surfaces. 

We summarize these observations in the following result.

\begin{thm}[Decomposition of $\JBDM$ and $\BDM^k_0$]
    \label{thm:HodgeDecompJBDM}
    One has the following discrete $L^2(M_h)$-orthogonal decomposition: 
    \begin{subequations}
    \begin{align}
        \JBDM &= \mathbf{rot}(\mathbb{S}^{k+1}_0) \osum \HBDM.
        \label{eq:JBDMDecomp}
        \intertext{
    The discrete harmonic fields capture the correct topology, as $\dim(\HBDM) = b_1(M)$.
    As a direct consequence, we also have the decomposition }
        \BDM^k_0 &= \mathbf{rot}(\mathbb{S}^{k+1}_0) \osum \HBDM \osum (\JBDM)^{\perp}. \label{eq:BDMDecomp}
    \end{align}
    \end{subequations}
\end{thm}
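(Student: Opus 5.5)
The orthogonal splittings themselves are immediate; the real work is the dimension count. Since $\rot\mathbb{S}^{k+1}_0\subset\BDM^k_0$ and $\div\circ\rot=0$, we have $\rot(\mathbb{S}^{k+1}_0)\subset\JBDM$. By definition, $\HBDM$ is the $L^2(M_h)$-orthogonal complement of $\rot(\mathbb{S}^{k+1}_0)$ inside the finite-dimensional space $\JBDM$, which gives \eqref{eq:JBDMDecomp}. Taking the orthogonal complement of $\JBDM$ in $\BDM^k_0$ then gives \eqref{eq:BDMDecomp}. It remains to show
\[
\dim\HBDM=\dim\JBDM-\dim\rot(\mathbb{S}^{k+1}_0)=b_1(M).
\]

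The first ingredient is the kernel of $\rot$ on $\mathbb{S}^{k+1}_0$. If $\rot\phi_h=0$, then $\grad\phi_h=0$ elementwise, so the continuous function $\phi_h$ is constant on each connected component of $M_h$. The constant is then forced to vanish by the boundary condition if $\Gamma\neq\emptyset$, or by the zero-mean condition if $\Gamma=\emptyset$. I will assume $M$ is connected; otherwise I would argue componentwise. Hence $\dim\rot(\mathbb{S}^{k+1}_0)=\dim\mathbb{S}^{k+1}_0$.

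The second ingredient is the range of the divergence. I claim that $\div:\BDM^k_0\to\mathbb{Q}$ is surjective, where $\mathbb{Q}=\mathbb{P}_{\!\mathcal{J}}^{k-1}$ restricted to zero mean, or the full space when no constraint applies. The standard $\BDM$ degrees of freedom are edge normal moments against $\mathbb{P}^k(E)$ and interior moments. Pulling back by the Piola map reduces this to the flat reference element, because $\div$ of a Piola-mapped field equals $\mathcal{J}_T^{-1}$ times the reference divergence. I would prove surjectivity in two steps. First, use the lowest-order normal fluxes, i.e.\ the $\mathds{RT}^0$-type part, together with a spanning-tree/graph argument on the dual mesh to match the element means of a zero-mean target. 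This is the classical connectedness argument, and on a closed surface the single compatibility condition is exactly the zero mean. Second, correct elementwise with interior bubbles, whose reference divergences span the mean-zero part of $\hat{\mathbb{P}}^{k-1}$. For $k=0$ the target space is trivial. Thus $\dim\JBDM=\dim\BDM^k_0-\dim\mathbb{Q}$.

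Combining the two ingredients,
\[
\dim\HBDM=\dim\BDM^k_0-\dim\mathbb{Q}-\dim\mathbb{S}^{k+1}_0 .
\]
This reduces everything to the dimension count deferred to the appendix (\ref{thm:countBDM}). Let $V,E,F$ be the numbers of vertices, edges and faces, with interior and boundary counts where relevant. The local dimensions are
\[
\dim\mathbb{S}^{k+1}=V+kE+\tfrac{k(k-1)}{2}F,\qquad \dim\BDM^k=(k+1)E+(k^2-1)F,\qquad \dim\mathbb{P}^{k-1}=\tfrac{k(k+1)}{2}F .
\]
Here the $\BDM$ interior count is $(k+1)(k+2)-3(k+1)$, and the constraints (boundary/zero mean) are subtracted as appropriate. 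Expanding, every $k$-dependent term cancels and leaves $E-V-F+c$. The correction $c$ is $1+1=2$ on a closed surface, coming from the zero-mean conditions on $\mathbb{S}$ and $\mathbb{Q}$. With boundary, we use interior edges and vertices and a single zero-mean constraint on $\mathbb{Q}$. The result is $-\chi(M)+2=b_1(M)$ in the closed case, and $E_{\rm int}-V_{\rm int}-F+1=1-\chi(M)=b_1(M)$ with boundary, using $\chi=V-E+F$ and $V_\partial=E_\partial$.

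I expect the main obstacle to be the surjectivity of the divergence on curved elements, in particular handling the $\mathcal{J}_T^{-1}$ weighting so that the image is exactly $\mathbb{P}_{\!\mathcal{J}}^{k-1}$ with the correct global constraint. I would handle it entirely on the reference element via the Piola identity, and treat the global mean constraints by the dual-graph argument. The bookkeeping of the boundary versus closed cases in the Euler characteristic count is routine but needs care.
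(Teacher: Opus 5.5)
Your route is the paper's: the orthogonal splittings hold by definition, and $\dim\HBDM=b_1(M)$ comes from the Euler-characteristic count in \ref{thm:countBDM}. There the paper simply writes down $\dim\JBDM=(k+1)|\mathcal{E}_{\mathcal{I}}|+\frac{k(k-1)}{2}|\mathcal{T}|-(|\mathcal{T}|-1)$. Your injectivity of $\rot$ on $\mathbb{S}^{k+1}_0$ and your surjectivity argument for $\div$ (dual-graph flux matching plus reference bubbles, transported by the Piola identity) are exactly what justifies that formula. For $k\ge 1$ your argument is complete.

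The gap is $k=0$. The paper explicitly allows this case and relies on it: by \Cref{rem:hierarchy}, $\mathds{J}^0_{\BDM}$ and $\mathds{H}^0_{\BDM}$ carry the whole harmonic structure. Here your local count is false.
\begin{itemize}
\item $\BDM^0$ consists of Piola-mapped piecewise constants, with two coefficients per element. The edge moments are not unisolvent; your interior count $k^2-1$ equals $-1$.
\item Your formula therefore gives $\dim\BDM^0_0=|\mathcal{E}_{\mathcal{I}}|-|\mathcal{T}|$, which is already $-1$ for a single triangle. The correct value is $|\mathcal{E}_{\mathcal{I}}|-|\mathcal{T}|+1$.
\item At the same time $\mathbb{Q}=\{0\}$, so there is no zero-mean constraint to subtract.
\end{itemize}
Your uniform expression $E-V-F+c$ returns $b_1(M)$ at $k=0$ only because these two off-by-one errors cancel. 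With the values your own steps actually produce ($\div$ trivially onto, $\dim\JBDM=\dim\BDM^0_0$) you would get $b_1(M)-1$, in both the closed and the bounded case.

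The repair uses your own dual-graph lemma. A Piola-mapped lowest-order Raviart--Thomas field $\hat{\bm a}+\hat b\,\hat{\bm x}$ is divergence-free iff $\hat b=0$, so $\BDM^0_0=\mathds{RT}^0_0\cap\ker(\div)$, as the paper notes. The map $\div:\mathds{RT}^0_0\to\mathbb{P}^0_{\!\mathcal{J}}\cap L^2_0(M_h)$ is onto by your flux-matching argument, since on each element both sides lie in the one-dimensional space $\mathbb{P}^0_{\!\mathcal{J}}(T)$ and have the same integral. This gives $\dim\mathds{J}^0_{\BDM}=|\mathcal{E}_{\mathcal{I}}|-(|\mathcal{T}|-1)$, which is the paper's formula at $k=0$, and the rest of your count then goes through unchanged.
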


\begin{figure}[!htbp]
    \centering
    \includegraphics[width=0.99\textwidth]{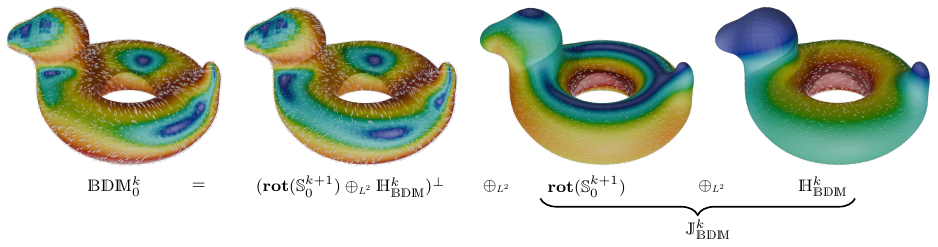}
    \caption{Visualization of the discrete decomposition \eqref{eq:BDMDecomp} of a vector field in $\BDM^k_0$ into a harmonic and rotation field and remaining complement (from right to left) on the ``bob'' geometry from \cite{crane2013robust}. Computed through the procedure explained in \Cref{sec:harmonic:algo} below.}
    \label{fig:splitting1}
\end{figure}

This theorem implies that the discretization of a divergence-free vector field in $\JBDM$ is equivalent to a discretization of a streamfunction in $\mathbb{S}^{k+1}_0$ and a harmonic field in $\HBDM$. 
A global basis for the space of discrete harmonic fields $\HBDM$ can be constructed through the algorithm described in \Cref{sec:harmonic:algo}.
Notably, the decomposition in \eqref{eq:JBDMDecomp} enables the implementation of higher-order variants of the methods in \cite{YNWWC23,ZhuYinChern2025,StreamVortForm}, which have thus far relied on the lowest-order ($k=0$) splitting.

\begin{rem}[Hierarchy of the $\BDM_0^k$ complex]
    \label{rem:hierarchy}
For $k\geq1$, 
we can decompose the $\BDM_0^k$ space into the lowest order component $\mathds{RT}_0^0$ and a higher-order complement $(\mathds{RT}_0^0)^c$.
The divergence-free subspace of $(\mathds{RT}_0^0)^c$ can be written as $\rot$ of $(\mathbb{S}^{1}_0)^c$, which is the complement of $\mathbb{S}^{1}_0$ in $\mathbb{S}^{k+1}_0$; see, for example, \cite{schoberl2005high,zaglmayr2006high}. 

Exploiting the Helmholtz--Hodge decomposition \eqref{eq:JBDMDecomp} of $\mathds{J}_{\BDM}^0$, we find
\begin{equation}\label{eq:jbdmh0}
\JBDM = \mathds{J}_{\BDM}^0 + (\mathds{J}_{\BDM}^0)^c 
\stackrel{\eqref{eq:JBDMDecomp}}{=} \rot(\mathbb{S}^{1}_0) + \mathds{H}_{\BDM}^0 + \rot((\mathbb{S}^{1}_0)^c)
= \rot(\mathbb{S}^{k+1}_0) + \mathds{H}_{\BDM}^0.
\end{equation}
This reveals that no higher order moments of harmonic fields are required to correctly capture topology; the lowest-order component $\mathds{J}_{\BDM}^0$ already contains the full harmonic structure.
\end{rem}


\begin{rem}[$H^2$-complex in the planar case]\label{rem:H2complex}
    For the special case of planar polygonal domains $M_h = M \subset \mathbb{R}^2$, it is feasible to directly discretize the $H^2$-complex from \eqref{eq:H2complex}, but the $H^2$-conforming stream function and the $H^1$-conforming velocity spaces have to be chosen carefully. For example, as explained in \ref{sec:SVcomplex}, the $H^2$-conforming Argyris element is not compatible with the $H^1$-conforming Scott--Vogelius element. Rather, one has to choose the Morgan-Scott space \cite{MS75} as the stream function space or modify the velocity space \cite{FN13}. 
    Another approach is to employ a Clough--Tocher (or Alfeld) split, allowing us to use the Hsieh--Clough--Tocher ($\mathbb{HCT}^{k+1}_0$), $k \ge 2$, element to replace the Argyris element and obtain the following chain complex
        \begin{equation}
    \centering
    \begin{tikzcd}
        H^2(M)\cap H^1_0(M) \arrow[r,"\rot"] \arrow[d,"\pi_h^1"] & \mathbf{H}^1(M)\cap\mathbf{H}_0(\div,M)\arrow[r,"\div"] \arrow[d,"\pi_h^2"] & L^2_0(M) \arrow[d,"\pi_h^3"] \\
        \mathds{HCT}_0^{k+1} \arrow[r,"\rot"] & \mathds{SV}_0^k \arrow[r,"\div"] & \mathbb{P}^{k-1}\cap L^2_0(M) 
    \end{tikzcd}
            \end{equation}
    This construction correctly captures the dimension of the harmonic fields, as shown in \Cref{thm:countHCT} or in \cite[Sec.~4.3]{JLMNR17} for the simply connected case.
    For a detailed discussion, we also refer to \cite{H25}.
\end{rem}

For the discretization of incompressible flows, such as the model problems from \Cref{subsec:ModelProblems}, it is sufficient to consider the characterization of the divergence-free subspace in \eqref{eq:JBDMDecomp}. In general, however, it is also interesting to consider the orthogonal complement $(\mathbb{J}_{\mathbb{BDM}}^k)^{\perp}$ to represent \emph{non-divergence-free} vector fields. To provide a mathematical exact characterization of this space, we may consider the discrete $L^2$-adjoint $\addiv: \mathbb{P}^{k-1} \to \mathbb{BDM}^k$, defined such that $(\addiv p, \mathbf{v})_{L^2(M_h)} = -(p, \div \mathbf{v})_{L^2(M_h)}$ for all $\mathbf{v} \in \mathbb{BDM}^k$.
Using this operator, we obtain the exact decompositions of the $\BDM^k$ spaces:
\begin{subequations} \label{eq:BDMDecomp2}
\begin{align}
    \mathds{BDM}^k_0 &= \rot(\mathbb{S}^{k+1}_0)\osum \HBDM \osum \addiv(\mathbb{P}^{k-1}\cap L^2_0(M_h)), \\
    \mathds{BDM}^k &= \rot(\mathbb{S}^{k+1}_0)\osum \rot(\mathbb{S}^{k+1}_{\Gamma_h})\osum \HBDM \osum \addiv(\mathbb{P}^{k-1}),
\end{align}
\end{subequations}
where $\mathbb{S}^{k+1}_{\Gamma_h}$ contains the elements of $\mathbb{S}^{k+1}$ associated to the boundary degrees of freedom. For the pressure reconstruction discussed in \Cref{sec:variants:press}, it is convenient to define $\addiv$ in  $\mathbb{P}^{k-1}$, but we could analogously define it in $\mathbb{P}_{\mathcal{J}}^{k-1}$. This leads to a slightly different $\addiv$ operator. 

\subsection{Incomplete discrete Helmholtz--Hodge decompositions} \label{ssec:searchgrad}
The construction of $\addiv$ discussed above is done implicitly via the discrete $L^2$-adjoint rather than through an explicit local construction.
Ideally, a complete discrete Helmholtz--Hodge decomposition would characterize this complement, both within $\BDM^k_0$ and the full discontinuous polynomial space $\mathds{P}^k$, as a pure discrete gradient space.
However, identifying standard scalar finite element spaces that generate these exact gradient complements is a highly non-trivial task. For instance, even in the simplified setting of simply connected planar domains with straight conforming triangulations $M_h = M \subset \mathbb{R}^2$ where the harmonic fields vanish, it has been shown that exact decompositions for $\mathds{P}^k$ using standard finite element spaces are severely limited \cite{BringmannKettelerSchedensack2023}.
%

Specifically, as shown in \cite[Thm 3.3]{BringmannKettelerSchedensack2023}, a complete decomposition is only straightforward for $k=0$, where the full space decomposes exact into
\begin{equation*}
    \mathds{P}^0 = \rot(\mathbb{S}^1_0) \osum \grad_h(\mathbb{CR}^1), \qquad \mathbb{CR}^1=\{v\in \mathbb{P}^1 \mid\Pi_{\mathbb{P}^0(\Eh)}\jump{v}=0\}\cap L^2_0(M_h),
\end{equation*}
where $\grad_h$ denotes the element-wise (broken) gradient and $\mathbb{CR}^1$ is the non-conforming Crouzeix--Raviart space; see also~\cite{WardetzkyThesis}. For $k=1$, a decomposition is only possible using two non-conforming spaces, and it is strictly impossible with any conforming space, such as $\mathbb{S}^2_0$. 
For polynomial orders $k \geq 2$, the fundamental obstruction is dimensional: the sum of standard compatible potential and streamfunction spaces simply lacks the degrees of freedom required to span the entirety of the discontinuous polynomial space \cite[Rem 5.4]{BringmannKettelerSchedensack2023}. 
Similarly, we do not expect to characterize $(\JBDM)^{\perp}$ purely as a standard gradient space.

However, if we accept the Helmholtz--Hodge decomposition to be \emph{incomplete}, we can still identify a suitable gradient subspace of $(\JBDM)^{\perp}$ to mirror the continuous decomposition \eqref{eq:HkDecomp}, explicitly capturing the structure of a rotational, a harmonic, and a gradient component.
For the following discussion, we assume that the surface $M_h$ consists of linear triangles to avoid issues stemming from the element-transformations, cf.~\Cref{rem:curved}. Assuming that $k+1$ is odd, we define the Crouzeix--Falk space with zero mean as $\mathbb{CF}^{k+1} \coloneqq \{v\in \mathbb{P}^{k+1} \mid\Pi_{\mathbb{P}^k(\mathcal{E}_h)}\jump{v}=0\} \cap L^2_0(M_h)$. This space is a canonical choice, since the normal trace conditions match those of $\mathbb{BDM}^k$ which ensures the required $L^2$-orthogonality between the broken gradients and the divergence-free fields. Then, we obtain
\begin{equation}\label{eq:HodgeNotPk}
    \rot(\mathbb{S}^{k+1}_0) \osum \HBDM \osum \grad_h(\mathbb{CF}^{k+1}) \subset \mathds{P}^k.
\end{equation}
This choice for the gradient space is not unique, and we could also choose, for example, the space $\grad(\mathbb{S}^{k+1}\cap L^2_0(M_h))$. 
Even though the decomposition \eqref{eq:HodgeNotPk} is not complete, we could still use it to discretize the components $\psi$, $\bm{h}$, $\phi$ of the Helmholtz--Hodge decomposition \eqref{eq:HkDecomp} of vector field $\v \in \mathbf{H}^1(M)$.

\begin{rem}[Curved triangulations]\label{rem:curved}
To transfer results of the (incomplete) discrete Helmholtz--Hodge decomposition from linear to curved triangulations, one requires suitable transformations of the discrete vector-valued and scalar function spaces. While such transformations can be chosen so that they preserve the structure of two components, for instance the streamfunction and velocity spaces, cf.~\Cref{sec:bdmrealization}, there exists no choice of transformations that simultaneously preserves compatibility of all components of the discrete Helmholtz--Hodge decomposition.
\end{rem}

\subsection{Handling the lack of $H^1$-conformity} \label{ssec:nonconformity} 

As discussed in \Cref{sec:VelocityComplexes}, we advocate using the $H^1\to\mathbf{H}(\div)\to L^2$ chain complex and its discretization rather than the $H^2\to \mathbf{H}^1\to L^2$ chain complex.
Besides the difficulty of constructing $\mathbf{H}^1$-conforming finite elements on surfaces that possess only $C^{0,1}$-regularity, 
the principal motivation is that with the $\BDM$ velocity space the former sequence admits a clean, cohomology-preserving discrete realization, which is essential for the structure-preserving Helmholtz--Hodge decomposition developed above (cf.\ \Cref{thm:HodgeDecompJBDM}).

An important consequence of this choice is that discrete velocity fields are only $\mathbf{H}(\div)$-conforming and \emph{not} $\mathbf{H}^1$-conforming. For problems whose well-posed weak formulation usually demand $\mathbf{H}^1$-regularity of the velocity, for example the model problems from \Cref{subsec:ModelProblems}, the resulting discretization is therefore \emph{non-conforming} with respect to the velocity space. There are two principal strategies to address this mismatch:

\begin{enumerate}[(i)]
    \item \textbf{Reformulation as a mixed system.}
    The problem can be reformulated so that the velocity in the well-posed weak formulation is required to lie only in $\mathbf{H}(\div)$ or less by introducing additional unknowns that absorb the higher-order differential operators.
    Let us mention two examples. 
    In the streamfunction--harmonic--vorticity formulation of \cite{YNWWC23,ZhuYinChern2025,StreamVortForm}, for example, introducing the vorticity as a separate unknown yields a formulation in which second-order operators act only on the \emph{scalar} streamfunction and \emph{scalar} vorticity; the velocity no longer appears as a primary unknown but is recovered as a derived quantity in $\mathbf{H}(\div)$.
    From a different angle, the mass-conserving mixed stress (MCS) formulation of Gopalakrishnan, Lederer, and Sch\"oberl \cite{GopalakrishnanLedererSchoeberl2020} introduces the deviatoric stress tensor as a new unknown.
    This splits the symmetric gradient operator and yields a saddle-point formulation in which the velocity is treated conformingly in the sense that it naturally belongs to $\mathbf{H}(\div)$.
    While the MCS formulation has so far been developed for flat domains, an extension to surfaces would be a natural complement to the present framework.

    \item \textbf{Non-conforming discretization via DG or Hybrid DG methods.}
    The alternative, which we pursue in this work (cf.\ \Cref{sec:ref-hdiv-dg} and below), is to accept the non-conformity and directly discretize the PDE in its original (second-order) form using discontinuous or hybrid discontinuous Galerkin (DG/HDG) methods.
    In this approach, discrete solutions are not $\mathbf{H}^1$-conforming; instead, tangential continuity across element interfaces is enforced \emph{weakly} --- i.e.\ in the limit of mesh refinement --- through the discrete formulation, for instance via consistent (hybrid) symmetric interior penalty terms \cite{LedererLehrenfeldSchoeberl2020,LehrenfeldSchoeberl2016}.
\end{enumerate} 
\section{Discretizations in the divergence-free subspace of $\BDM$ for surface flows}\label{sec:ref-hdiv-dg}
This section describes how the divergence-free subspace $\JBDM$ of the BDM complex can be used for the numerical solution of surface flow problems. First, we outline a randomized algorithm for constructing a basis of the discrete harmonic fields $\HBDM$ and then we formulate the discrete problem directly in the
divergence-free space using the streamfunction--harmonic split. Finally, we discuss non-conforming discretizations for Surface Stokes and Navier--Stokes equations.

Let us note that it suffices to compute $\mathds{H}_{\mathds{BDM}}^0$ to form $\JBDM$, cf. \eqref{eq:jbdmh0} in \Cref{rem:hierarchy}. 
Computing $\mathds{H}_{\mathds{BDM}}^k$ is only relevant to conveniently represent the harmonic part of the solution.

\subsection{Randomized construction of the harmonic basis}\label{sec:harmonic:algo}
The space of discrete harmonic fields $\mathds{H}_{\mathds{BDM}}^k$ is defined globally 
and does not admit a local basis. However, we can construct a global basis by adapting the randomized strategy of \cite[Algo 4]{YNWWC23}, which yields an $L^2(M_h)$-orthonormal basis of $\mathds{H}_{\mathds{BDM}}^k$ with probability one in $\dim(\HBDM)$ steps. This construction of discrete harmonic fields can be generalized to the setting of abstract Hilbert chain complexes, cf. \ref{ssec:RandomizedHarmonicAbstractHilbertComplexes}.

\begin{algo}[Harmonic basis via divergence-free projection]\label{alg:harmonic}
Let $n=b_1(M) = \dim(\HBDM)$ be the number of harmonic fields required. \\
Initialize $i=1$ and $B=\emptyset$.
\begin{enumerate}[1.)]
    \item \textbf{Random divergence-free sample.} 
   With respect to some (arbitrary) regular distribution, choose a random vector $\mathbf{r}_h \in \mathds{BDM}^k$ (or $\mathds{P}^k$),  $\|\mathbf{r}_h\|_{L^2(M_h)}=1$, 
    and compute its $L^2(M_h)$-projection into $\mathds{J}^k_{\mathds{BDM}}$
    \[
    \mathbf{u}_h \coloneqq  \Pi_{\JBDM} \bm r_h \ =\ \underset{\bm u_h\in \mathds{J}^k_{\mathds{BDM}}}{\operatorname*{argmin}}\, \norm{\bm u_h - \bm r_h}_{L^2}.
    \]
    In \Cref{ssec:helmholtzproj} we discuss efficient realizations of the projection $\Pi_{\JBDM}$.
    \item \textbf{Remove streamfunction part.}
    Compute the streamfunction $\psi_h \in \mathbb{S}^{k+1}_0$ by solving the Laplace problem
    \begin{equation*}
        (\rot(\psi_h), \rot(\phi_h))_{M_h} = {\color{gray!80} (\grad(\psi_h), \grad(\phi_h))_{M_h} =} \, (\mathbf{u}_h, \rot(\phi_h))_{M_h},~\forall \phi_h \in \mathbb{S}^{k+1}_0.
    \end{equation*}
    Define the candidate harmonic field as $\mathbf{w}_h \coloneqq \mathbf{u}_h - \rot(\psi_h) \perp \rot(\mathbb{S}^{k+1}_0)$.
    
    \item \textbf{Orthogonalization and Update.} 
    Orthogonalize $\mathbf{w}_h$ against the 
    basis vectors in $B$ 
    with respect to the $L^2(M_h)$-inner product yielding $\tilde{\mathbf{w}}_h$. 
    If $\|\tilde{\mathbf{w}}_h\|_{L^2(M_h)}<\texttt{tol}$, discard the sample and return to Step 1.).
    Otherwise, normalize the result to obtain $\mathbf{h}^i_h$. 
    Accept the new basis vector by setting $B \leftarrow B\cup\{\mathbf{h}_h^i\}$, increment $i \leftarrow i+1$, and repeat until $i > n$.
\end{enumerate}
The resulting set $B=\{\mathbf{h}_h^i\}_{i=1}^n$ forms an $L^2(M_h)$-orthonormal basis of $\mathds{H}_{\mathds{BDM}}^k$. 
\end{algo}





\subsection{Discretizing PDEs in $\JBDM$} \label{ssec:discinJBDM}
We want to solve a discrete problem directly in the divergence-free subspace $\JBDM$ to discretize a problem with a divergence free solution. Thus, we want to find $\bm{u}_h \in \JBDM$ such that
\begin{equation}\label{eq:genericproblem}
    \mathcal{F}_h(\u_h,\v_h) = f_h(\v_h) \qquad \forall\, \bm{v}_h \in \JBDM,
\end{equation}
where $\mathcal{F}_h(\cdot,\cdot)$ is a semilinear form representing the discretization of the respective model problem at hand. 
In \cite{LedererLehrenfeldSchoeberl2020}, the problem is rather formulated as a saddle-point problem in the full $\BDM$ space, where the divergence-free constraint is enforced via a Lagrange multiplier (the pressure):
find $(\u_h, p_h) \in \BDM^k_0 \times \mathbb{P}^{k-1}$ such that
\begin{subequations}\label{eq:genericproblem:saddlepoint}
    \begin{align}
        \hspace*{0.1\textwidth}\mathcal{F}_h(\u_h,\v_h) + (\div \v_h, p_h)_{M_h} &= f_h(\v_h) && \forall\, \bm{v}_h \in \BDM^k_0, \hspace*{0.1\textwidth}~\\
        \hspace*{0.1\textwidth}(\div \u_h, q_h)_{M_h} &= 0 && \forall\, q_h \in \mathbb{P}^{k-1}. \hspace*{0.1\textwidth}~
    \end{align}
\end{subequations}
The velocity solution $\u_h$ of \eqref{eq:genericproblem:saddlepoint} is the \emph{same as the solution of \eqref{eq:genericproblem}}; in particular, $\u_h \in \JBDM$, cf. \cite[Lemma 1]{LedererLehrenfeldSchoeberl2020}.
In the following however, we want to avoid the saddle-point formulation and directly solve \eqref{eq:genericproblem} in $\JBDM$. 

Formally, we introduce a Galerkin isomorphism between $\R^{N_{\mathbb{J}}}$ and $\JBDM$ with $N_{\mathds{J}} \coloneqq N_{\mathbb{S}}+N_{\mathds{H}}$, where $N_{\mathbb{S}}\coloneqq\dim(\mathbb{S}^{k+1}_0)$ is the dimension of the streamfunction space and $N_{\mathds{H}} \coloneqq \dim(\HBDM) =b_1(M)$ is the number of harmonic fields. With $\{\varphi_i\}_{i=1}^{N_{\mathbb{S}}}$ being a basis of $\mathbb{S}^{k+1}_0$ and $\{\bm{h}_h^j\}_{j=1}^{N_{\mathbb{H}}}$ being a basis of $\HBDM$, we define 
\begin{equation}\label{eq:Galisomorphism}
    \mathcal{G}_{\mathbb{J}}:\; \R^{N_{\mathbb{J}}} \;\to\; \BDM_0^k, \qquad
    \mathcal{G}_{\mathbb{J}} \bm{x} \;:=\; 
    \sum_{i=1}^{N_{\mathbb{S}}} x_{i} \,\rot(\varphi_i)
    \;+\; 
    \sum_{j=1}^{N_{\mathbb{H}}} x_{j+N_{\mathbb{S}}}\, \bm{h}_h^j.
\end{equation}
In particular, we can write any $\bm{u}_h \in \JBDM$ uniquely as
$\bm{u}_h = \rot(\psi_h) + \bm{h}_h$ for $\psi_h \in \mathbb{S}^{k+1}_0, \bm{h}_h \in \HBDM$.
We could now directly insert this decomposition into \eqref{eq:genericproblem} and formulate the problem directly in terms of the unknowns $(\psi_h, \bm{h}_h)$: find $(\psi_h, \bm{h}_h) \in \mathbb{S}^{k+1}_0 \times \HBDM$ such that
\begin{align}\label{eq:fourthOrder}
    \mathcal{F}_h(\rot(\psi_h)+\bm{h}_h, \rot(\phi_h)+\bm{k}_h) = f_h(\rot(\phi_h)+\bm{k}_h)
    \qquad \forall\, (\phi_h, \bm{k}_h) \in \mathbb{S}^{k+1}_0 \times \HBDM.
\end{align}
However, a realization of \eqref{eq:fourthOrder} requires additional attention due to the additional derivatives in the streamfunction part and the non-local nature of the harmonic basis functions if $\HBDM \neq \{ 0 \}$.

To circumvent these difficulties, we can exploit that $\JBDM$ is a subspace of a larger \emph{parent space} $\mathbf{V}_h \supset \JBDM$, for example, $\mathbf{V}_h = \mathds{BDM}^k_0$ or $\mathbf{V}_h = \mathds{P}^k$. Then, we can represent 
the Galerkin isomorphism $\mathcal{G}_{\mathbb{J}}$ from \eqref{eq:Galisomorphism} through a Galerkin isomorphism $\mathcal{G}_{\mathbf{V}}: \R^{N_{\mathbf{V}}} \to \mathbf{V}_h$ of the parent space $\mathbf{V}_h$ and a linear injection $T: \R^{N_\mathds{J}} \to \R^{N_{\mathbf{V}}}$ such that $\mathcal{G}_{\mathbb{J}} = \mathcal{G}_{\mathbf{V}} \circ T$.  
Instead of assembling the problem in $\JBDM$ directly, one sets up the discrete operator in the parent space $\mathbf{V}_h$, for which standard finite element infrastructure is available, and then restricts it to $\JBDM$ via the embedding $T$. Depending on the implementation framework, this could be done element-wise but also globally.

Both viewpoints lead to the same discrete problem.
The direct approach \eqref{eq:fourthOrder} sets up algebraic systems based on a basis in $\JBDM = \rot(\mathbb{S}^{k+1}_0) \osum \HBDM$, leading to the algebraic problem: Find $\bm{x} \in \R^{N_\mathds{J}}$ such that
\begin{equation} \label{eq:algsystem}
\mathcal{F}_h(\mathcal{G}_{\mathbb{J}}\bm{x},\, \mathcal{G}_{\mathbb{J}}\bm{y}) = f_h(\mathcal{G}_{\mathbb{J}}\bm{y}) \qquad  \forall \bm{y} \in \R^{N_\mathds{J}}.
\end{equation}
In the embedding approach, \eqref{eq:algsystem} is assembled by restricting the parent-space operator:
letting $\mathbf{A}_{\mathbf{V}}$ be the stiffness matrix of $\mathcal{F}_h(\mathcal{G}_{\mathbf{V}}\cdot,\,\mathcal{G}_{\mathbf{V}}\cdot)$ and $\bm{f}_{\mathbf{V}}$ the corresponding load vector, \eqref{eq:algsystem} is equivalent to the reduced system
\begin{equation}\label{eq:reducedsystem}
    T^\top \mathbf{A}_{\mathbf{V}}\, T\, \bm{x} = T^\top \bm{f}_{\mathbf{V}}.
\end{equation}
For the numerical experiments in \Cref{sec:numerics} we construct $T$ via the \emph{conforming (embedded) Trefftz} approach \cite{jcmeyermaster,LehrenfeldStockerTrefftz}, choosing $\mathbf{V}_h = \mathds{P}^k$ (discontinuous piecewise polynomials) as the parent velocity space.
The construction of $T$ proceeds element-locally for the streamfunction columns.
On each element $T \in \mathcal{T}_h$, one works in the discontinuous parent spaces $\mathds{P}^{k+1}(T) \times \mathds{P}^k(T)$ for the streamfunction and velocity respectively, and imposes two conditions: a \emph{Trefftz condition} enforcing $\mathbf{u}_h|_T = \rot(\phi_h)|_T$ in $L^2(T)$, and a \emph{conformity constraint} ensuring that $\phi_h$ agrees on shared boundaries with a globally continuous function in $\mathbb{S}^{k+1}_0$.
For each basis function $\varphi_i$ of $\mathbb{S}^{k+1}_0$ supported on $T$, this leads to a small element-local linear system
\begin{equation}\label{eq:TrefftzLocal}
    K_T\, T_T = L_T,
\end{equation}
where $K_T$ encodes both conditions and $L_T$ collects the right-hand sides from the conforming streamfunction basis.
The velocity rows of the solution $T_T$ provide the local columns of $T$ for the streamfunction degrees of freedom; these element-local solves are independent and can be performed in parallel.
The $N_{\mathds{H}}$ columns of $T$ corresponding to the harmonic basis vectors $\{\bm{h}^j_h\}_{j=1}^{N_{\mathds{H}}} \subset \mathds{P}^k$ are appended globally, completing the embedding.

\subsection{Discretization of the surface Stokes and surface Navier--Stokes equations}
To discretize the surface Stokes and Navier--Stokes problems \eqref{eq:surfaceStokes} and \eqref{eq:surfaceNavierStokes} with $\BDM^k_0$, respectively $\JBDM$, as the velocity space, we consider standard discontinuous Galerkin (DG) formulations for the viscous and advective terms following \cite{LedererLehrenfeldSchoeberl2020}. Recall the notation from \Cref{subsec:triangulation}, in particular \Cref{fig:sketchofedgepatch}. 

For $\u_h, \v_h \in  \mathds{BDM}^k_0$, we define the symmetric interior penalty bilinear form $a_h(\cdot,\cdot)$ as 
\begin{subequations}\label{eq:StokesSIP}
    \begin{align}
        a_h(\u_h,\v_h) \coloneqq &\sum_{T \in \Th}\int_T \mu\, \underline{\bm{\epsilon}}(\u_h) : \underline{\bm{\epsilon}}(\v_h) \, \mathrm{d}x + \sum_{E \in \Eh} \int_E \avg{- \mu\, \underline{\bm{\epsilon}}(\u_h)} \jump{\v_h}_{\bm{\tau}} \, \mathrm{d}s \\
        &+ \sum_{E \in \Eh} \int_E \avg{- \mu\, \underline{\bm{\epsilon}}(\v_h)} \jump{\u_h}_{\bm{\tau}} \, \mathrm{d}s + \frac{\alpha \mu}{h} \sum_{E \in \Eh} \int_E \jump{\u_h}_{\bm{\tau}} \jump{\v_h}_{\bm{\tau}} \, \mathrm{d}s
    \end{align}
\end{subequations}
Then, a DG discretization of \eqref{eq:surfaceStokes} on $\JBDM$ reads as: find $\u_h \in \JBDM$ such that 
\begin{equation}\label{eq:discreteSurfaceStokes}
    a_h(\u_h,\v_h) = f_h(\v_h) \quad \forall \v_h \in \JBDM.
\end{equation}
Introducing the pressure as a Lagrange multiplier to enforce the divergence-free constraint on a $\BDM^k_0$-velocity space, cf. \eqref{eq:genericproblem:saddlepoint}, yields the Stokes discretization from \cite[Sec. 3.3]{LedererLehrenfeldSchoeberl2020}.


\begin{remark}
    On simply connected flat domains $M \subset \mathbb{R}^2$, we have that $\HBDM = \{ 0 \}$ and may replace $\underline{\bm{\epsilon}}(\u_h)$ by $\grad(\u_h)$. In this setting, the fourth order problem \eqref{eq:fourthOrder} corresponds to a $C^0$-interior penalty discretization of the biharmonic equation with $\mathbb{S}^{k+1}_0$, as previously observed in \cite{KanschatSharma2014}: 
    \begin{subequations}
        \begin{align*}
            a_h(\rot(\psi_h), \rot(\phi_h)) = &\sum_{T \in \Th} \int_{T} \mu \nabla^2 \psi_h : \nabla^2 \phi_h \, \mathrm{d}x - \sum_{E \in \Eh} \int_{E} \avg{ \mu \nabla^2 \psi_h} \jump{\grad\phi_h} \, \mathrm{d}s \\
            - &\sum_{E \in \Eh} \int_{E} \avg{\mu \nabla^2 \phi_h} \jump{\grad \psi_h} \, \mathrm{d}s + \frac{\alpha \mu}{h} \sum_{E \in \Eh} \int_{E} \jump{\grad \psi_h} \jump{\grad \phi_h}\, \mathrm{d}s.
        \end{align*}
    \end{subequations}
\end{remark}
%
%
%
%
To discretize the surface Navier--Stokes equation \eqref{eq:surfaceNavierStokes} (in space), we additionally require a  discrete formulation of the convection term. 
We choose a standard upwind DG treatment of the convection term, cf. e.g. \cite[Sec.~3.4]{LedererLehrenfeldSchoeberl2020}.
For $\u_h, \v_h \in \BDM_0^k,~ \w_h \in \JBDM$, we define
\begin{align}
    c_h(\w_h; \u_h,\v_h) \coloneqq & \sum_{T \in \Th} \left( - \int_{T} \u_h \partial_{\w_h} \v_h \,\mathrm{d}x + \int_{\partial T} (\w_h \cdot \nv) (\u_h^{\text{up}} \cdot \v_h) \,\mathrm{d}s \right) 
    \label{eq:conv-form}
\end{align}
with the upwind choice defined as 
$
    \u_h^{\text{up}}(x) \coloneqq (\u_h \cdot \nv_{1}) \cdot \nv_{1} + 
    (\u_h(x) \vert_{T^\ast} \cdot \tv) \cdot \tv 
$
where $T^\ast$ is the upwind element, i.e. the element adjacent to $x\in\partial T$ with $\w_h \cdot \nv_{T^\ast} > 0$.
Crucially, this upwind choice guarantees convection robustness and energy stability, cf. \cite[Sec. 3.5.2]{LedererLehrenfeldSchoeberl2020}. 

Altogether, we consider a semi-discrete discretization of the unsteady surface Navier--Stokes equations \eqref{eq:surfaceNavierStokes} as follows: find $\u_h \in \JBDM$ such that
\begin{subequations} \label{eq:disc:SNSE}
    \begin{align}
        (\partial_t \u_h, \v_h)_{M_h} + a_h(\u_h,\v_h) + c_h(\u_h;\u_h,\v_h) &= (\bm{f}, \v_h)_{M_h} &&\forall \v_h \in \JBDM,~t \in (0,T], \label{eq:NSvh} \\
        (\u_h,\v_h)_{M_h}  &=  (\u_0,\v_h)_{M_h} &&\forall \v_h \in \JBDM,~t = 0.
    \end{align}
\end{subequations}

 \section{Implementation aspects}\label{sec:implementation}

We devote this section to practical aspects of realizing the discrete formulations described in \Cref{sec:ref-hdiv-dg}.
After describing how the dense parts of the system matrix corresponding to the non-local harmonic basis vectors can be handled efficiently, we briefly discuss hybridization as a means to reduce the computational cost of DG methods. We then address the efficient computation of the Helmholtz projection and the pressure reconstruction.

\subsection{Schur complement treatment of harmonic unknowns}\label{sec:impl:schur}

After discretizing \eqref{eq:genericproblem} in $\JBDM$ and possibly linearizing, we obtain a linear system of dimension $N_{\mathds{J}} = N_{\mathbb{S}} + N_{\mathds{H}}$.
Ordering the unknowns with the $N_{\mathbb{S}}$ streamfunction coefficients first and the $N_{\mathds{H}}$ harmonic coefficients second, the system matrix takes the block form
\begin{equation}\label{eq:blocksystem}
    \begin{bmatrix} A_{\mathbb{SS}} & A_{\mathbb{S}\mathds{H}} \\ A_{\mathds{H}\mathbb{S}} & A_{\mathds{HH}} \end{bmatrix}
    \begin{bmatrix} \bm{x}_{\mathbb{S}} \\ \bm{x}_{\mathds{H}} \end{bmatrix}
    =
    \begin{bmatrix} \bm{b}_{\mathbb{S}} \\ \bm{b}_{\mathds{H}} \end{bmatrix}.
\end{equation}
The streamfunction block $A_{\mathbb{SS}}$ is sparse (reflecting local finite element interactions), while the off-diagonal blocks $A_{\mathbb{S}\mathds{H}}$ and $A_{\mathds{H}\mathbb{S}}$ are dense --- each consisting of $N_{\mathds{H}}$ full columns or rows --- because the harmonic basis vectors are non-local vector fields.

Naturally, this structure leads to a Schur complement strategy, where the first block row for $\bm{x}_{\mathbb{S}}$ gives
\begin{equation}\label{eq:schurS}
    A_{\mathbb{SS}} \bm{x}_{\mathbb{S}} = \bm{b}_{\mathbb{S}} - A_{\mathbb{S}\mathds{H}}\, \bm{x}_{\mathds{H}}.
\end{equation}
Substituting into the second row yields the Schur complement system for the harmonic unknowns: \vspace*{-0.7em}
\begin{equation}\label{eq:schurH}
    \overbrace{\left(A_{\mathds{HH}} - A_{\mathds{H}\mathbb{S}}\, A_{\mathbb{SS}}^{-1} A_{\mathbb{S}\mathds{H}}\right)}^{S_{\mathds{HH}}} \cdot \bm{x}_{\mathds{H}}
    = \bm{b}_{\mathds{H}} - A_{\mathds{H}\mathbb{S}}\, A_{\mathbb{SS}}^{-1}\, \bm{b}_{\mathbb{S}}.
\end{equation}
The inverse $A_{\mathbb{SS}}^{-1}$ does not need to be formed explicitly, and setting up the Schur complement system \eqref{eq:schurH} requires only $N_{\mathds{H}} + 1$ sparse linear solves with $A_{\mathbb{SS}}$, which is feasible since $N_{\mathds{H}} = b_1(M)$ is typically very small.
The resulting $N_{\mathds{H}} \times N_{\mathds{H}}$ system \eqref{eq:schurH} is dense but very small. Thus, it can be solved directly, after which $\bm{x}_{\mathbb{S}}$ follows from back-substitution via \eqref{eq:schurS}.
Consequently, all large solves involve only the sparse $A_{\mathbb{SS}}$, for which standard solver and preconditioning strategies apply without modification.

\subsection{Hybridization} \label{ssec:hybrid}
A classical approach to reduce the computational costs of discontinuous Galerkin methods is \emph{hybridization} \cite{CGL09,L10,LehrenfeldSchoeberl2016}. 
We briefly discuss an $H(\div)$-conforming HDG formulation of \eqref{eq:surfaceStokes} as introduced in \cite{LedererLehrenfeldSchoeberl2020}.
For an edge $E \in \Eh$, let $\hat{E} \subset \mathbb{R}^2$ be an edge of the reference triangle $\hat{T}$ and $\Phi_E$ be such that $E = \Phi_E(\hat{E})$. Similar as in \eqref{eq:ScalarpolynomialSpaces}, we define the space of tangential vector fields on the edge skeleton as
\begin{align}
    \FacetSp \coloneqq \{ v \cdot \tv \mid~ v \in L^2(\Eh) \text{ s.t. } \forall E \in \Eh \exists \hat{v} \in \hat{\mathbb{P}}^k(\hat{E}) \text{ s.t. } v \vert_{E} = \hat{v} \circ \Phi_E^{-1} \}.
\end{align}
Then, a hybridized version of $a_h(\cdot,\cdot)$ as defined in \eqref{eq:StokesSIP} is given by
\begin{align*}
    a_h^{\text{HDG}}((\u_h,\bm{\lambda}_T),(\v_h, \bm{\mu}_{T})) \coloneqq \sum_{T \in \Th} &\int_{T}\mu\, \underline{\bm{\epsilon}}(\u_h) : \underline{\bm{\epsilon}}(\v_h) \, \mathrm{d}x + \int_{\partial T} \mu\, \underline{\bm{\epsilon}}(\u_h)(\bm{\mu}_T - \v_h) \, \mathrm{d}s \\
    &+ \int_{\partial T} \mu\, \underline{\bm{\epsilon}}(\v_h)(\bm{\lambda}_T - \u_h) \, \mathrm{d}s + \frac{\alpha \mu }{h} \int_{\partial T} (\bm{\lambda}_T - \u_h) (\bm{\mu}_T - \v_h) \, \mathrm{d}s, 
\end{align*}
where $(\u_h, \bm{\lambda}_T), (\v_h, \bm{\mu}_T) \in \BDM^k_0 \times \FacetSp$.
The main computational advantage of this formulation is that the volume unknowns $\u_h$ only couple through the facet unknowns $\bm{\lambda}_T$ and can thus be eliminated through static condensation in a similar manner as \eqref{eq:schurH}. Restricting to the divergence-free subspace $\JBDM$, we obtain the HDG discretization of \eqref{eq:surfaceStokes}: find $(\u_h,\bm{\lambda}_T) \in \JBDM \times \FacetSp$ such that 
\begin{align*}
    a_h^{\text{HDG}}((\u_h,\bm{\lambda}_T),(\v_h, \bm{\mu}_{T})) = f_h(\v_h) \qquad \forall (\v_h, \bm{\mu}_{T}) \in \JBDM \times \FacetSp.
\end{align*}
In the numerical examples below, we also apply this formulation. 
In its Lagrange multiplier form, cf. \eqref{eq:genericproblem:saddlepoint}, this formulation has been used and its computational advantages over the standard DG formulation have been demonstrated in \cite{LedererLehrenfeldSchoeberl2020}. 

The part of the formulation in $\JBDM$ corresponding to the streamfunction can again be translated to a discretization of the biharmonic equation. A related discretization for the flat case based on a $C^0$ hybrid interior penalty method has recently been considered in \cite{DE24C0HHO}.

\subsection{$L^2$ projection into the divergence free subspace of $\BDM$: the Helmholtz projection $\Pi_{\JBDM}$}\label{ssec:helmholtzproj}
A crucial building block for the efficient computation of the harmonic basis in \Cref{alg:harmonic} is the Helmholtz projection $\Pi_{\JBDM}$, which maps any vector field $\bm r_h$ in a parent space, for example, $\BDM_0^k$ or $\mathds{P}^k$, to its divergence-free component in $\JBDM$.
A standard \emph{mixed} formulation of the Helmholtz projection $\Pi_{\JBDM}$ reads: find $(\bm u_h,\lambda_h)\in \mathds{BDM}^k\times \mathbb{P}^{k-1}$ such that
\begin{subequations}\label{eq:mixedproj}
\begin{align}
\scp{\bm u_h}{\bm v_h}_{M_h} + \scp{\div \bm v_h}{ \lambda_h}_{M_h} \ &=\ \scp{\bm r_h}{\bm v_h}_{M_h} && \forall \bm v_h\in \mathds{BDM}_0^k,\\
\scp{\div \bm u_h}{\mu_h}_{M_h}\ &=\ 0 && \forall \mu_h\in \mathbb{P}^{k-1},
\end{align}
\end{subequations}
and then set $\Pi_{\JBDM} \bm r_h := \bm u_h \in \JBDM$.
Note that this mixed formulation actually realizes more than the projection $\Pi_{\JBDM}$: it first projects the input $\bm r_h$ onto $\BDM_0^k$ as $\tilde{\bm r}_h = \Pi_{\BDM_0^k} \bm r_h$ and then splits this projection into its divergence-free part $\bm u_h$ and the discrete gradient part represented by the Lagrange multiplier, $\tilde{\bm r}_h = \bm u_h + \addiv \lambda_h$. We will exploit this decomposition again in \Cref{sec:variants:press} to reconstruct the pressure.

The saddle-point structure of \eqref{eq:mixedproj} admits an efficient realization through a \emph{hybrid mixed formulation}.
Introducing a scalar Lagrange multiplier $\hat\lambda_h \in \SFacetSp$ on the skeleton to enforce normal continuity of $\bm u_h$, both the velocity $\bm u_h$ and the pressure(-like) variable $\lambda_h$ can be eliminated element-by-element by static condensation, since the velocity block is a \emph{local $L^2(M_h)$} operator.
The resulting global system involves only the edge unknowns $\hat\lambda_h$ and constitutes a symmetric positive definite discrete Poisson operator on the skeleton, cf.\ \cite{GuoshengFu2019,CGL09}.
In particular, the overall cost of the solution of \eqref{eq:mixedproj} is comparable to a single scalar Poisson solve on the skeleton and hence 
$\Pi_{\JBDM}$ can be computed efficiently.
This is different to the situation for the discrete Stokes problem, where the velocity block involves the stiffness matrix of the SIP formulation, so that static condensation for only one scalar variable on the edges is not possible. 

\subsection{Pressure reconstruction}\label{sec:variants:press}
By design, the discrete formulations in $\JBDM$ are pressure-free, but there are applications where the pressure field is relevant, for instance, for computing stress forces at the boundary. 
In these situations, pressure can be reconstructed from the velocity. 
Here, we choose a reconstruction based on the space decomposition of the parent space $\mathds{BDM}_0^k$, leading to the same pressure as the one obtained from the mixed formulation in \cite{LedererLehrenfeldSchoeberl2020}.

The key observation is that the load functional $\bm f_h$ is a functional on all of $\BDM^k_0$, not merely on $\JBDM$. By the decomposition of \eqref{eq:BDMDecomp2}, the component of $\bm f_h$ acting on $\JBDM$ drives the velocity solution, while the part acting on $(\JBDM)^\perp = \addiv(\mathbb{P}^{k-1})$ has no effect on velocity --- test functions in $(\JBDM)^\perp$ are annihilated by the divergence-free constraint --- and must therefore be balanced by the pressure gradient.

Concretely, given the velocity solution $\u_h \in \JBDM$ of \eqref{eq:genericproblem}, the force residual $\bm f_h(\v_h) - a_h(\u_h, \v_h)$ vanishes on $\JBDM$ by construction but is in general nonzero on $(\JBDM)^\perp$. The pressure $p_h \in \mathbb{P}^{k-1}$ is determined by requiring that it captures this residual entirely:
\begin{equation}\label{eq:pressurerecon}
    (\div \v_h, p_h)_{M_h} = \bm f_h(\v_h) - a_h(\u_h, \v_h) \qquad \forall \v_h \in (\JBDM)^\perp.
\end{equation}
Since $(\JBDM)^\perp = \addiv(\mathbb{P}^{k-1})$, substituting $\v_h = \addiv q_h$ turns \eqref{eq:pressurerecon} into the \emph{discrete pressure Poisson equation}: find $p_h \in \mathbb{P}^{k-1}$ such that
\begin{equation}\label{eq:pressurepoisson}
    (\addiv p_h, \addiv q_h)_{M_h} = \bm f_h(\addiv q_h) - a_h(\u_h, \addiv q_h) \qquad \forall q_h \in \mathbb{P}^{k-1},
\end{equation}
which is symmetric positive semi-definite and well-posed up to a constant.
An efficient realization of \eqref{eq:pressurepoisson} is obtained by the hybrid mixed formulation of the Helmholtz projection as in \eqref{eq:mixedproj} for $\bm r_h$ being the Riesz representation of the force residual $\bm f_h(\cdot) - a_h(\u_h, \cdot)$ w.r.t. the $L^2(M_h)$ inner product, i.e., replacing $(\bm r_h, \v_h)_{M_h}$ by $\bm f_h(\v_h) - a_h(\u_h, \v_h)$ in \eqref{eq:mixedproj}. The solution $\lambda_h$ then solves \eqref{eq:pressurerecon} and hence we set $p_h := \lambda_h$ to obtain the pressure reconstruction.



\begin{rem}[Pressure reconstruction via incomplete discrete Helmholtz--Hodge decomposition]
    On linear triangulations, one can alternatively employ an incomplete Helmholtz--Hodge decomposition as in \eqref{eq:HodgeNotPk} to reconstruct the pressure.
    Applying the same reasoning yields a discrete pressure Poisson equation analogous to \eqref{eq:pressurepoisson}, except that the operator $\addiv$ is replaced by $\grad_h$, and the space $\mathbb{P}^{k-1}$ is replaced by the corresponding pressure space.
\end{rem} \section{Numerical experiments}\label{sec:numerics}

This section presents two numerical experiments for the unsteady Surface Navier–Stokes equations, illustrating the performance of the streamfunction–harmonic formulation on curved surfaces. All numerical experiments are implemented in the finite element software framework \texttt{Netgen/NGSolve} \cite{Sch97,Sch14} using the \texttt{NGSTrefftz} add-on \cite{NGSTrefftzJOSS} for the (conforming) embedded Trefftz approach. The code is available for reproduction at \cite{grodata}.


\paragraph{Implementation choices}
We use the spatial discretization outlined in \eqref{eq:disc:SNSE} in its hybridized version, as detailed in \Cref{ssec:hybrid}. For the time discretization we use a semi-implicit Euler scheme with constant time step, treating the convection explicitly and the viscosity implicitly; see \cite{LedererLehrenfeldSchoeberl2020} for further details. 
To generate the basis of $\HBDM$, we apply \Cref{alg:harmonic} and use the embedded approach outlined in \Cref{ssec:discinJBDM} to embed $\HBDM$ and $\rot(\mathbb{S}_0^{k+1})$ into $\BDM_0^k$.
The arising sparse linear systems with the same s.p.d.~matrix in each time step are solved with a sparse Cholesky factorization.

\subsection{Scope of numerical investigations and an equivalence validation}\label{sec:sanitycheck}
We omit detailed convergence studies and extensive numerical benchmarks, since the proposed formulation yields a velocity solution that is theoretically equivalent to the formulation in \cite{LedererLehrenfeldSchoeberl2020}. To verify this equivalence numerically, we solve the steady surface Brinkmann problem (Stokes with an added mass term to circumvent killing vector fields) with the smooth body force
$
\bm{f} = (\sin 2x + \cos z,\cos(xy),\sin(x+z))^\top
$
on the torus (genus~$1$, $\dim\HBDM = 2$), using three successively refined meshes and polynomial order~$k=2$.

We compare the velocity-pressure $\mathbf{H}(\operatorname{div})$-HDG formulation of~\cite{LedererLehrenfeldSchoeberl2020} (yielding the solution~$\u_h^{\mathrm{vp}}$) with the streamfunction-harmonic formulation (yielding~$\u_h^{\mathrm{sf}}$). The results are reported in \Cref{tab:equivalence}. On all three meshes, the relative $L^2$ discrepancy
$
\norm{\u_h^{\mathrm{vp}} - \u_h^{\mathrm{sf}}}_{M_h} /
\norm{\u_h^{\mathrm{vp}}}_{M_h}
$
remains several orders of magnitude below the expected discretization error. The small observed differences can be attributed to several factors, including the use of non-identical numerical quadrature rules (on the curved geometry), finite tolerances in the computation of the embedding, and linear solver tolerances.
Overall, the results confirm the discrete equivalence of the two formulations up to the numerical tolerances employed throughout the computation.

\begin{table}[htbp]
  \centering
  \small
  \begin{tabular}{r @{~~~~~~~} r @{~~~~~~~} r @{~~~~~~~} r}
    \toprule
    $N_{\!\mathcal{T}}$
      & ${\norm{\u_h^{\mathrm{vp}} - \u_h^{\mathrm{sf}}}_{M_h}}/%
              {\norm{\u_h^{\mathrm{vp}}}_{M_h}}$
      & $\norm{\div\u_h^{\mathrm{vp}}}_{M_h}$
      & $\norm{\div\u_h^{\mathrm{sf}}}_{M_h}$ \\[4pt]
    \midrule
     $782$ & $3.64\times10^{-8}$ \hspace*{0.85cm} & $2.17\times10^{-16}$ & $3.93\times10^{-11}$ \\
    $1214$ & $3.42\times10^{-9}$ \hspace*{0.85cm} & $3.59\times10^{-15}$ & $8.80\times10^{-11}$ \\
    $4740$ & $2.80\times10^{-8}$ \hspace*{0.85cm} & $3.81\times10^{-15}$ & $1.90\times10^{-09}$ \\
    \bottomrule
  \end{tabular}
  \caption{Numerical equivalence check on the torus (polynomial order $k=2$).
    $\u_h^{\mathrm{vp}}$: solution of velocity-pressure HDG of~\cite{LedererLehrenfeldSchoeberl2020};
    $\u_h^{\mathrm{sf}}$: solution of streamfunction-harmonic formulation (this work). $N_{\!\mathcal{T}}$: number of surface triangles.
  }
  \label{tab:equivalence}
\end{table}

As a consequence of the equivalence we note that the streamfunction-harmonic formulation inherits the same versatility with respect to boundary condition treatment: Dirichlet and stress conditions can be prescribed in the same manner as in the corresponding velocity-pressure formulation, and the corresponding examples with mixed boundary conditions yield identical numerical results up to machine precision. To focus on the role of harmonic fields in topologically non-trivial settings, we concentrate on cases without boundaries or homogeneous Dirichlet boundary conditions, and refer readers to \cite{LedererLehrenfeldSchoeberl2020} for comprehensive boundary-driven test cases.
Likewise, we consider only smooth surfaces, although the methods extend directly to $C^0$ polyhedral surfaces. A rough comparison of the number of degrees of freedom between the velocity-pressure formulation and the streamfunction-harmonic formulation is provided in \Cref{tab:dofs}. 

\begin{table}[!htbp]
    \begin{minipage}{0.2\textwidth}
        \hspace*{-0.15\textwidth}
        \includegraphics[width=1.3\textwidth]{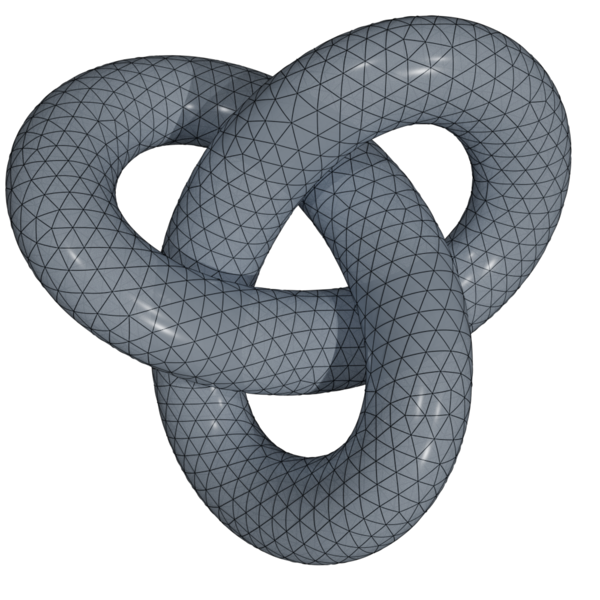} 
        \hspace*{-0.15\textwidth}
        \vspace*{-0.15\textwidth}
    \end{minipage}
    \begin{minipage}{0.799\textwidth}
        \centering
        \begin{tabular}{rr@{ \color{gray}( + }r@{\color{gray}) + }r@{ = }c}
            \toprule
            streamf.-harm. & $\dofs_{\rot}$ & \color{gray}$\dofs_{\lambda}$ & $\dofs_{\!\uharm}$ & $\dofs_{\text{total}}$ \\ 
            \midrule
             & $27920$ & \color{gray}$20940$ &$2$  &$27922 \color{gray}(+20940)$ \\[2ex]
            velocity-press. & $\dofs_{\u}$ & \color{gray}$\dofs_{\lambda}$ & $\dofs_p$ & $\dofs_{\text{total}}$ \\ 
            \midrule
             & $48860$ & \color{gray}$20940$ & $20940$ & $69800 \color{gray}(+20940)$ \\
            \bottomrule
        \end{tabular}
    \end{minipage}
    \vspace{-2.5ex}
    \caption{Left: Geometry and mesh for example of \Cref{sec:trefoil}. Right: Comparison of \dofs~ for the considered streamfunction-harmonic formulation (first row) and the velocity-pressure formulation with $\mathbf{H}(\operatorname{div})$-conforming velocity space (second row). The numbers in gray indicate the contribution of the hybrid unknowns for hybridized version of both methods.} \label{tab:dofs}
\end{table}

\subsection{Trefoil knot}\label{sec:trefoil}
As a first geometrically and topologically non-trivial benchmark, we consider a \emph{thick trefoil knot}. The surface is defined as the set of points at a fixed distance $R \approx 5$ from the centreline of a trefoil knot curve, which is scaled to fit inside the box $[0,100]^3$, see \Cref{tab:dofs} for a visualization of geometry and used mesh. The resulting surface has genus $g=1$, and hence the discrete harmonic space is two-dimensional, $\dim(\HBDM) = 2$.

We consider the unsteady surface Navier--Stokes equations \eqref{eq:surfaceNavierStokes} with kinematic viscosity $\mu = 0.1$.
As a localized forcing we prescribe
$\bm{f} \coloneqq 4 \times 10^{-4}\, \mu \cdot (0,1,0)^\top $ if $x < 40$ and set $\bm{f}$ zero everywhere else. This drives a surface-tangential jet in the $y$-direction in one half of the knot. Although the forcing is localized, the incompressibility constraint enforces a global divergence-free velocity field, so the induced jet is in fact sustained throughout the entire knot.  The initial data is chosen as the Stokes solution, that is, $\bm{u}(\cdot,0) = \bm{u}_{\mathrm{Stokes}}$, producing a maximum velocity of approximately $\|\bm{u}\|_\infty \approx 6$, which yields an estimated Reynolds number $Re = \|\bm{u}\|_\infty 2R \mu^{-1} \approx \frac{6 \cdot 10}{0.1} = 600$. At this Reynolds number, we expect and indeed observe non-laminar behavior on the surface.

\begin{figure}[!htbp]
    \centering
    \vspace*{-1ex}
    \begin{tabular}{c@{~}c@{\!\!}c@{\!\!}c@{\!\!}c@{\!\!}c}
        & $t=0$ & $t=100$ & $t=200$ & $t=400$ & $t=800$
        \\[-0.65ex]
        \rotatebox{90}{\begin{minipage}{2.5cm}\centering\small $\bm{u}$\end{minipage}}&
        \includegraphics[width=0.2\textwidth]{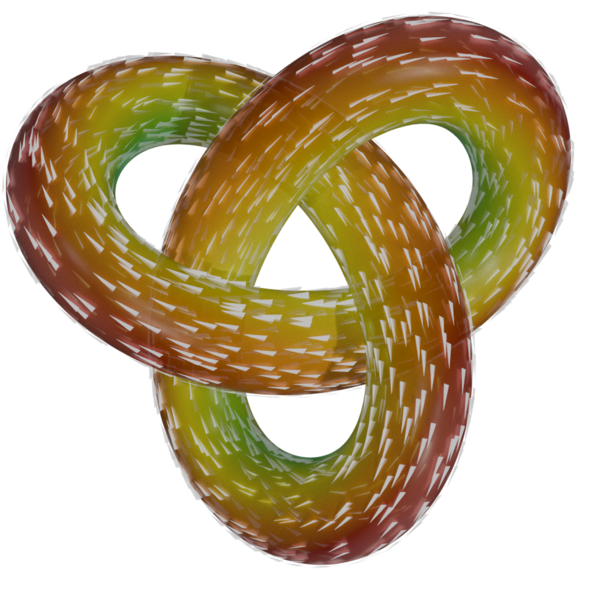}&
        \includegraphics[width=0.2\textwidth]{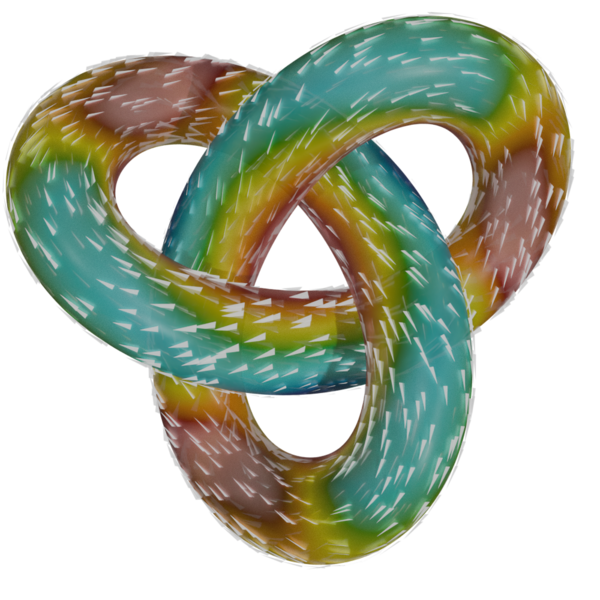}&
        \includegraphics[width=0.2\textwidth]{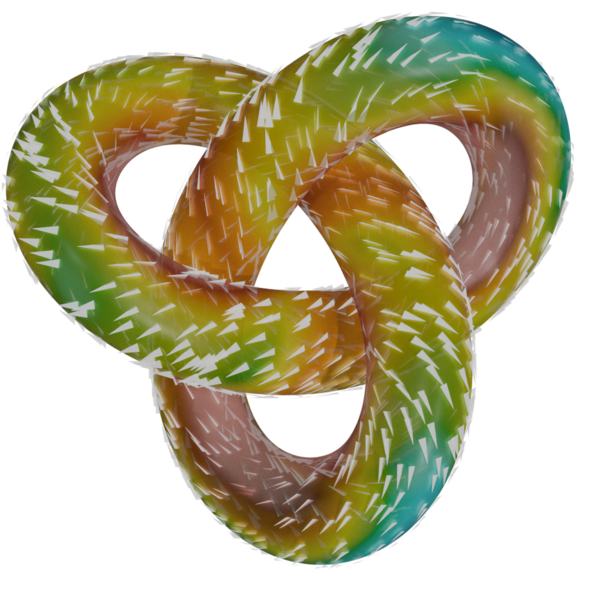}&
        \includegraphics[width=0.2\textwidth]{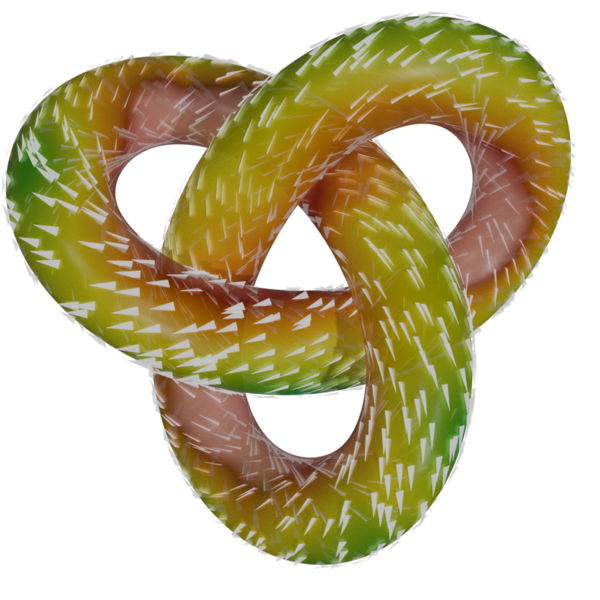}&
        \includegraphics[width=0.2\textwidth]{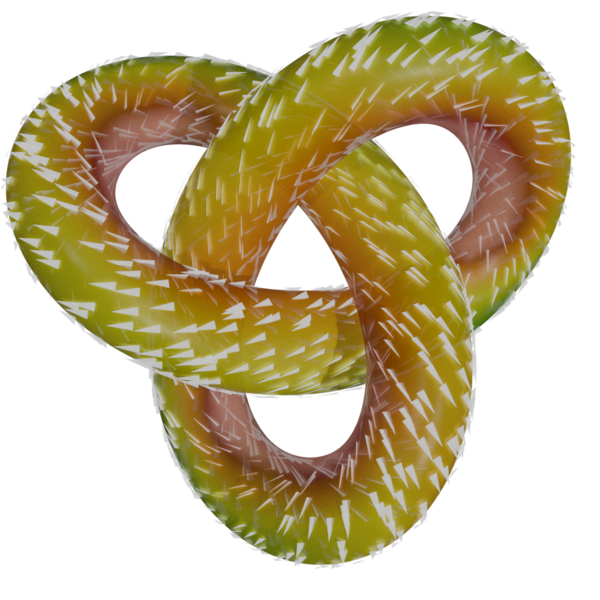}
        \\[-2.2ex]
        \rotatebox{90}{\begin{minipage}{2.5cm}\centering\small $\urot = \rot(\psi)$\end{minipage}}&
        \includegraphics[width=0.2\textwidth]{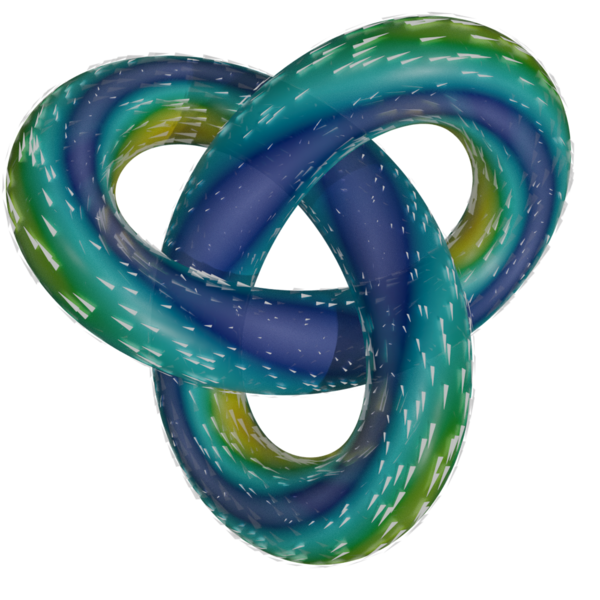}&
        \includegraphics[width=0.2\textwidth]{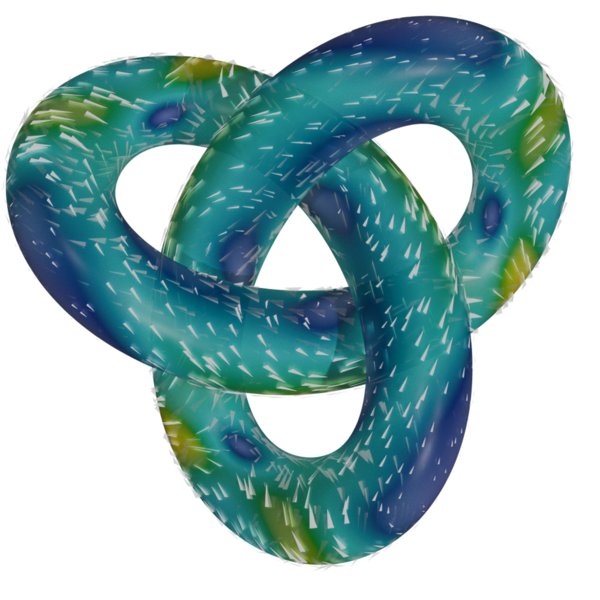}&
        \includegraphics[width=0.2\textwidth]{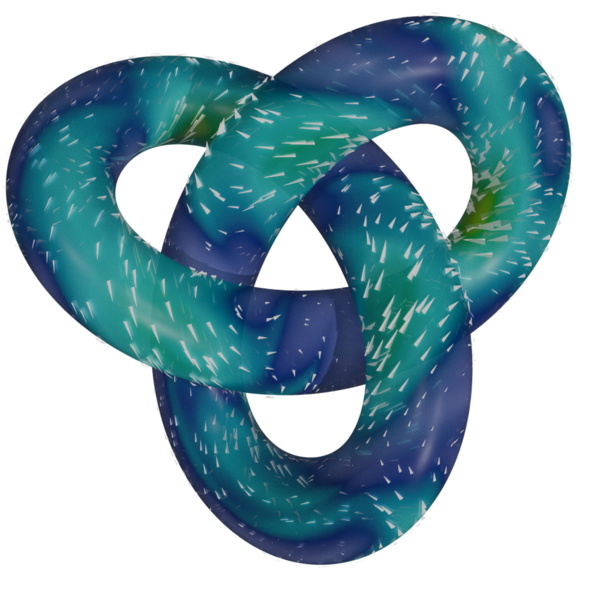}&
        \includegraphics[width=0.2\textwidth]{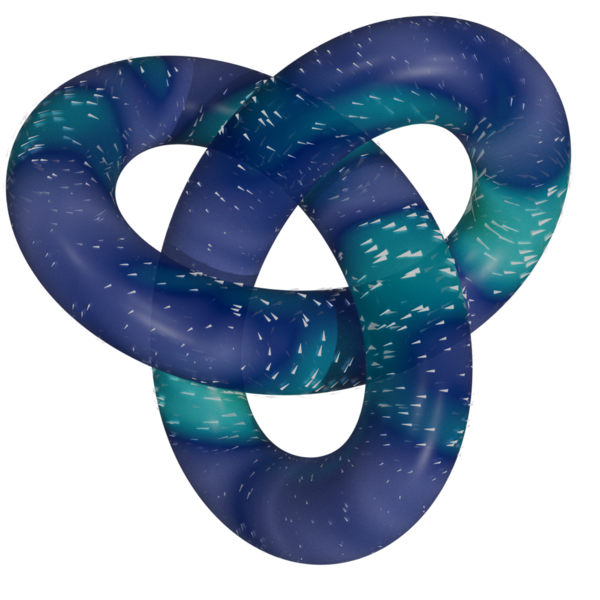}&
        \includegraphics[width=0.2\textwidth]{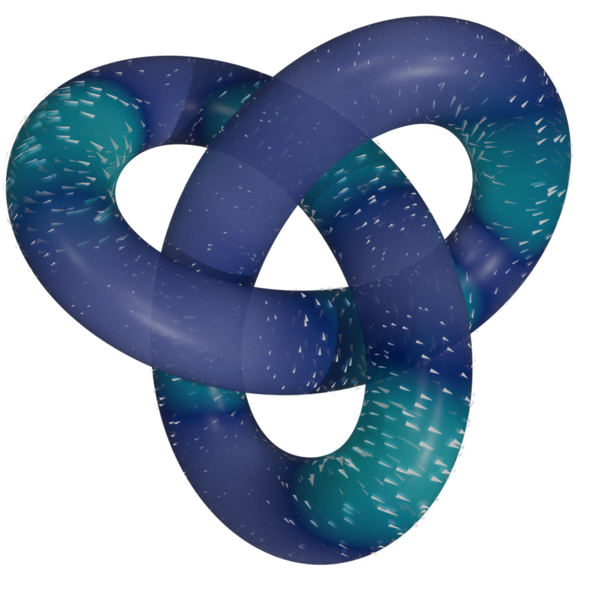}
        \\[-2.2ex]
        \rotatebox{90}{\begin{minipage}{2.5cm}\centering\small $\uharm$\end{minipage}}&
        \includegraphics[width=0.2\textwidth]{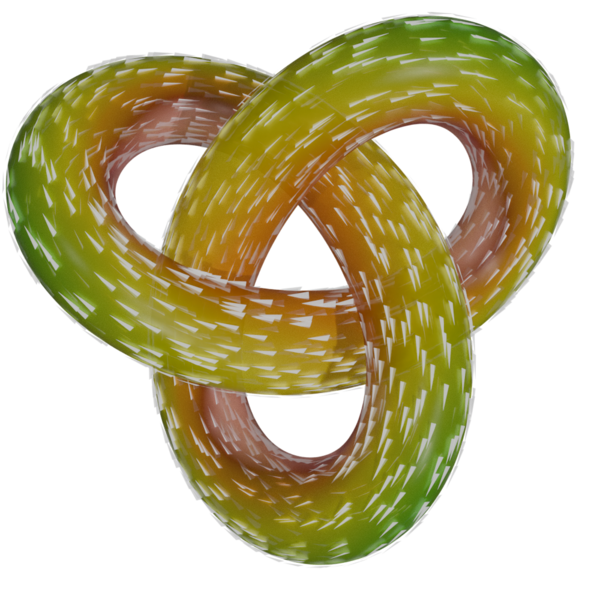}&
        \includegraphics[width=0.2\textwidth]{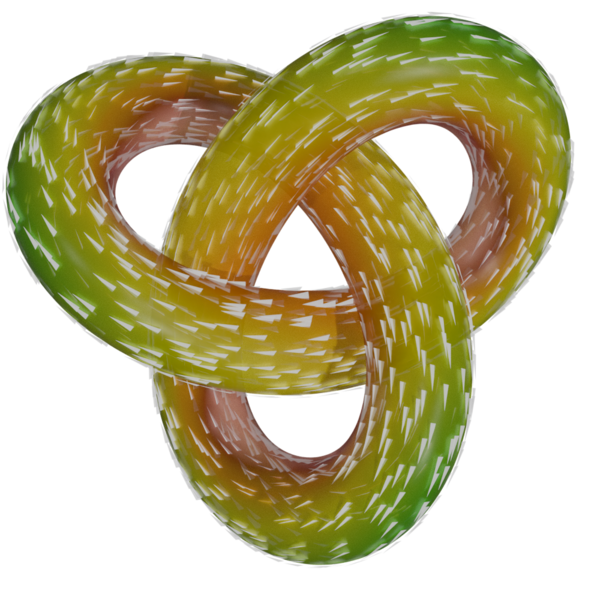}&
        \includegraphics[width=0.2\textwidth]{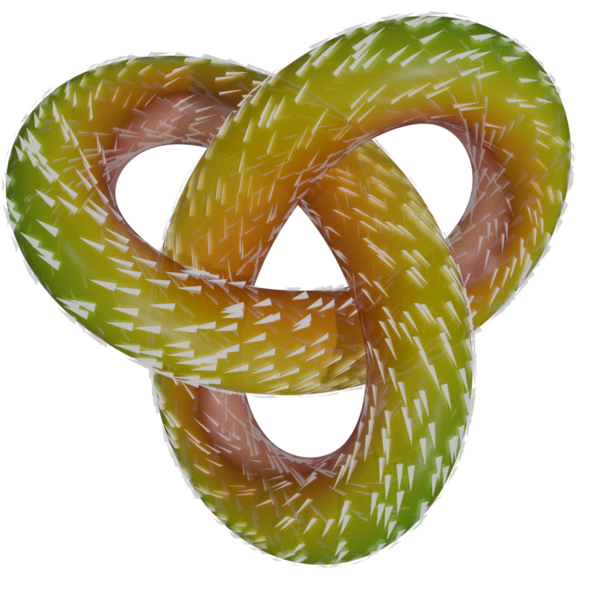}&
        \includegraphics[width=0.2\textwidth]{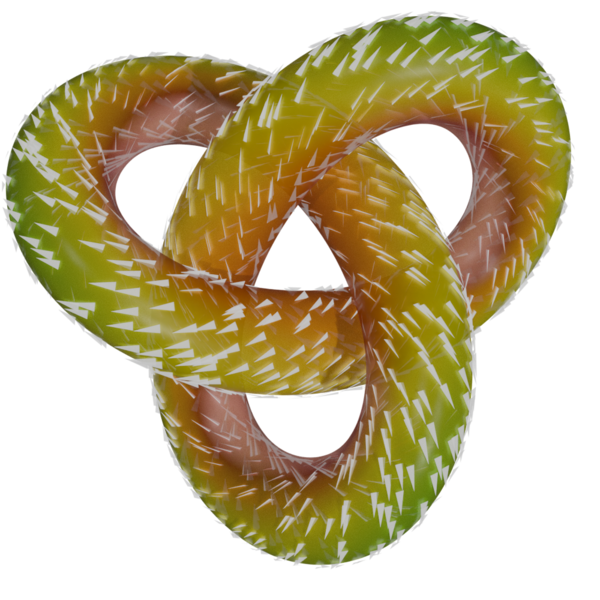}&
        \includegraphics[width=0.2\textwidth]{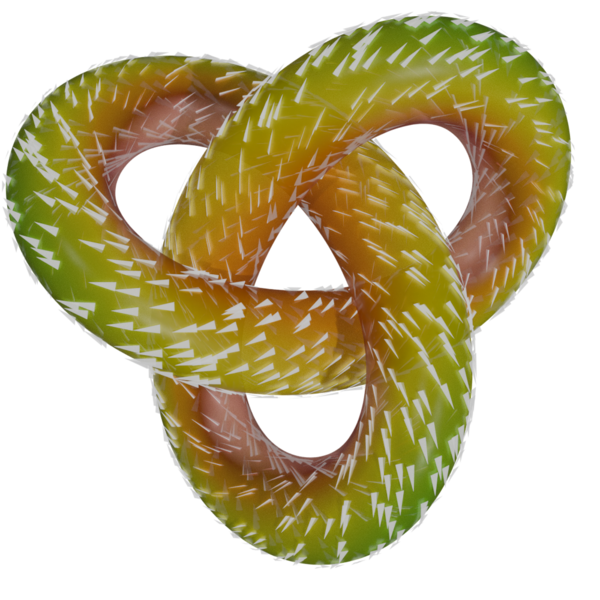}
        \\[-1.5ex]
    \end{tabular}
    \hspace*{0.65\textwidth} \small$0$ \raisebox{1.75ex}{\rotatebox{270}{\includegraphics[width=0.02\textwidth]{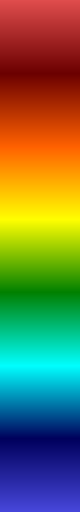}}} $5$
    \vspace*{-0.3cm}
    \caption{Snapshots of the surface Navier--Stokes flow on the thickened trefoil knot at times $t=0,\,100,\,200,\,400,\,800$. Each column shows the total velocity $\bm{u}_h$ (top), the rotational part $\urot = \rot(\psi)$ (middle, arrows scaled $\times 2$), and the harmonic part $\uharm$ (bottom) . The colors indicate velocity magnitude, scaled from $0$ to $5$. A full animation is available in \cite{grodata} and at \href{https://www.youtube.com/watch?v=DqHXLNkhF0o}{\texttt{youtu.be/DqHXLNkhF0o}}.} 
    \label{fig:trefoil}
\end{figure}

We use polynomial degree $k=3$ on an isoparametric quasi-uniform unstructured mesh of $3490$ curved surface triangles and geometric approximation order $k=3$. The approximate mesh size is $h \approx 6$ and the time step size is $\Delta t = 10^{-3}$. The resulting number of degrees of freedom (\dofs) for the streamfunction-harmonic formulation and the velocity-pressure formulation are summarized in \Cref{tab:dofs}.

In \Cref{fig:trefoil}, we display five snapshots of the full velocity $\u$, the rotational part $\urot = \rot(\psi)$, and the harmonic part $\uharm \in \HBDM$ at times $t \in \{0, 100, 200, 400, 800\}$. 

These snapshots reveal two distinct phases of the flow evolution.
In the early phase ($t \lesssim 200$), the flow undergoes a transient rearrangement, where the initial Stokes response to the forcing is destabilized, and  the velocity field reorganizes globally across the knot surface.
Approaching $t \approx 200$, a quasi-organized regime emerges. The dominant coherent structures are vortices that are localized on individual cross-sectional circles of the knot tube and rotate around the tube's cross-section.
These vortex structures are then transported along the knot centerline.
The space splitting makes this structure transparent: the harmonic part $\uharm$ captures the large-scale transport along the knot (as the harmonic fields on a genus-$1$ surface encode the non-contractible circulation around the tube), while the rotational part $\urot = \rot(\psi)$ accounts precisely for the localized azimuthal rotation around each cross-section.
This clean separation between along-knot transport and cross-sectional rotation is a direct consequence of the topological structure of the surface.

\subsection{Pierced \texttt{NGSolve}-sculpture surface}\label{sec:sculpture}

As a second example we consider the unsteady surface Navier--Stokes equations \eqref{eq:surfaceNavierStokes} on the outer surface of the \texttt{NGSolve} sculpture. The geometry consists of a sphere of radius $0.8$ centered at a point $\mathbf{x}_c \in \mathbb{R}^3$, from which three large cylindrical holes of radius $0.4$, aligned with the coordinate axes, and one small cylindrical hole of radius $0.05$ and diameter $0.1$ are removed. A sketch of the geometry is shown in \Cref{fig:sculpturesurf_geo}.

\begin{figure}[!htbp]
    \centering
    \includegraphics[width=0.98\textwidth]{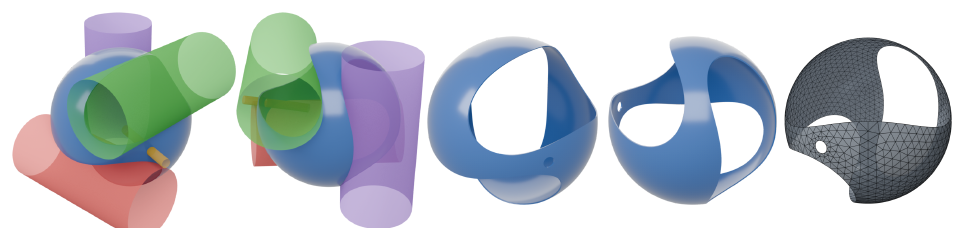}
    \vspace*{-2.5ex}
    \caption{Sketch of the pierced \texttt{NGSolve}-sculpture surface: a sphere with three large cylinders (radius $0.4$) and one small cylinder (radius $0.05$) removed. The left two pictures show the surface with the ``ghost'' cylinders that are removed from two different angles, while the third and fourth picture show the actual surface without the ghost cylinders from the same angles. The right picture shows the surface mesh used for the simulation.}
    \label{fig:sculpturesurf_geo}
\end{figure}
The resulting surface has genus~$0$ and four boundary components (the rims of the cylindrical holes) yielding $\dim(\HBDM)=3$.
Homogeneous (no-slip) boundary conditions are imposed on all four rims, so that we have a surface with boundary $\Gamma \neq \emptyset$.
The small hole was introduced deliberately to trigger a K\'arm\'an vortex street in the wake region downstream of the obstruction.

The driving force is a rigid-body rotation of the sphere around the $z$-axis projected onto the divergence-free subspace,
$\bm{f} \;=\; 100 \mu (\mathbf{x}-\mathbf{x}_c)/\Vert \mathbf{x}-\mathbf{x}_c \Vert \times (0,0,1)^\top$,
with kinematic viscosity $\mu = 10^{-3}$.
The simulation is initialized with the Stokes solution for the same forcing, i.e.\ $\bm{u}(\cdot,0) = \bm{u}_{\mathrm{Stokes}}$, producing a maximum velocity of approximately $\|\bm{u}\|_\infty \approx 2$.
Based on the diameter $D = 0.1$ of the small hole, this gives an estimated Reynolds number $Re = \|\bm{u}\|_\infty D / \mu \approx 200$; in the quasi-stationary regime ($t > 20$) the velocity near the small hole drops to approximately $\|\bm{u}\|_\infty \approx 1$, yielding $Re \approx 100$.

\begin{figure}[!htbp]
    \centering
    \vspace*{-1ex}
    \begin{tabular}{@{}c@{}c@{}c@{}c@{}c@{}c@{}c}
        & $t=0$ & $t=1.25$ & $t=5$ & $t=10$ & $t=20$ & $t=40$
        \\[-0.45ex]
        \rotatebox{90}{\begin{minipage}{2.2cm}\centering\small $\bm{u}$\end{minipage}}&
        \includegraphics[width=0.16\textwidth]{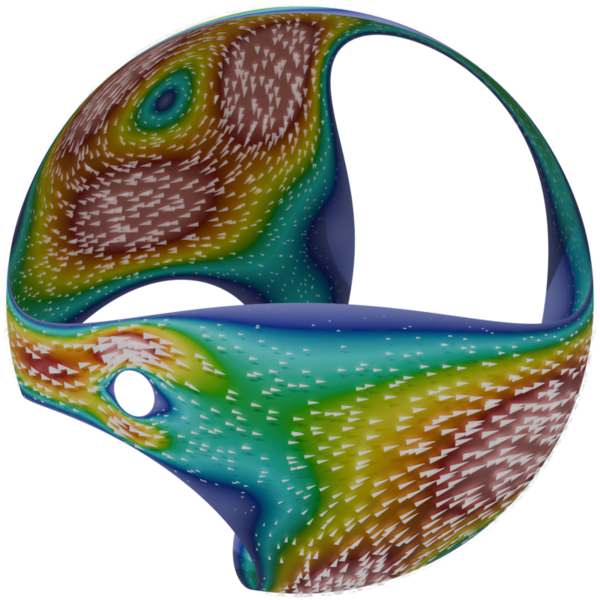}&
        \includegraphics[width=0.16\textwidth]{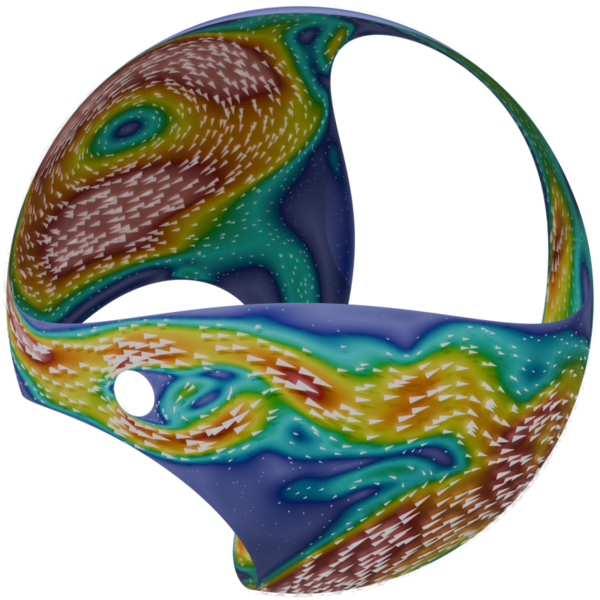}&
        \includegraphics[width=0.16\textwidth]{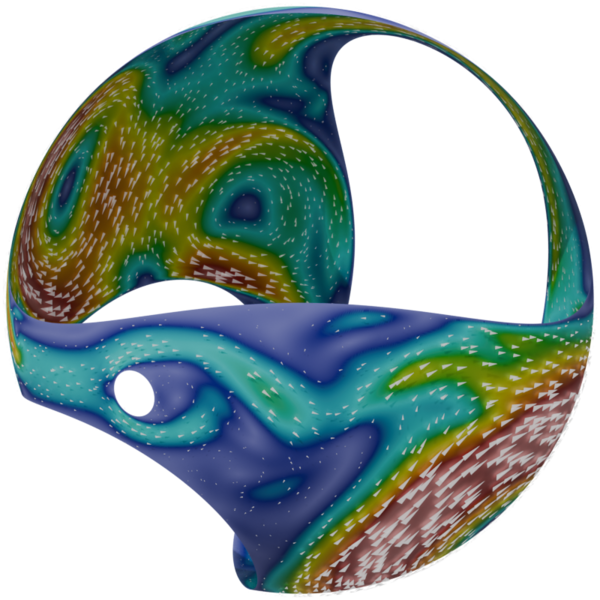}&
        \includegraphics[width=0.16\textwidth]{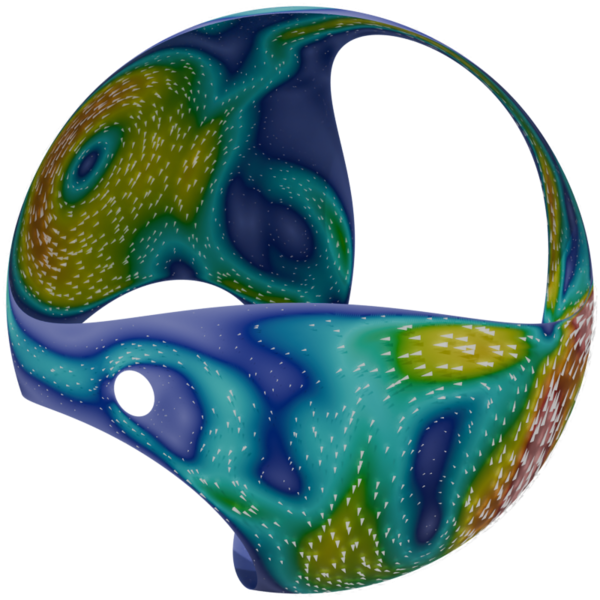}&
        \includegraphics[width=0.16\textwidth]{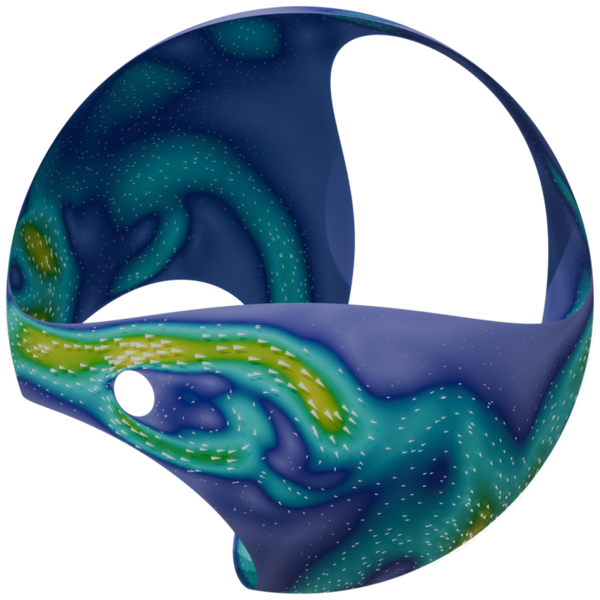}&
        \includegraphics[width=0.16\textwidth]{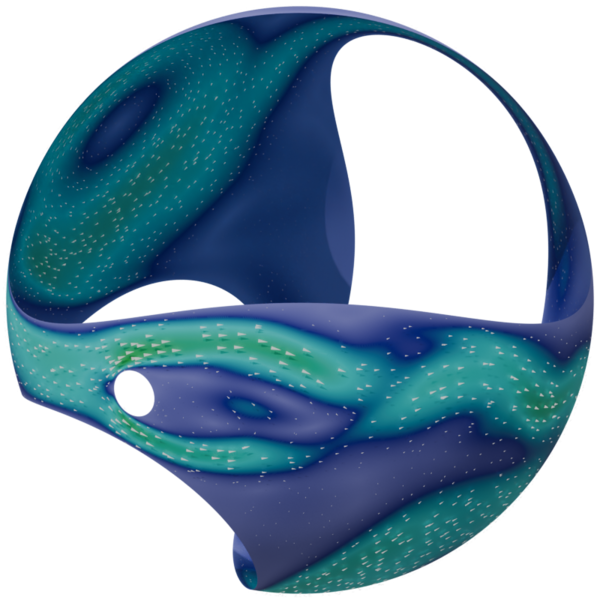}
        \\[-1ex]
        \rotatebox{90}{\begin{minipage}{2.2cm}\centering\small $\urot = \rot(\psi)$\end{minipage}}&
        \includegraphics[width=0.16\textwidth]{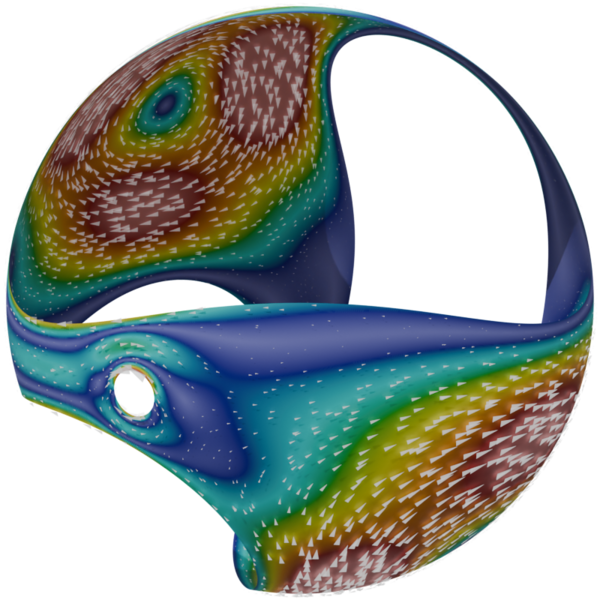}&
        \includegraphics[width=0.16\textwidth]{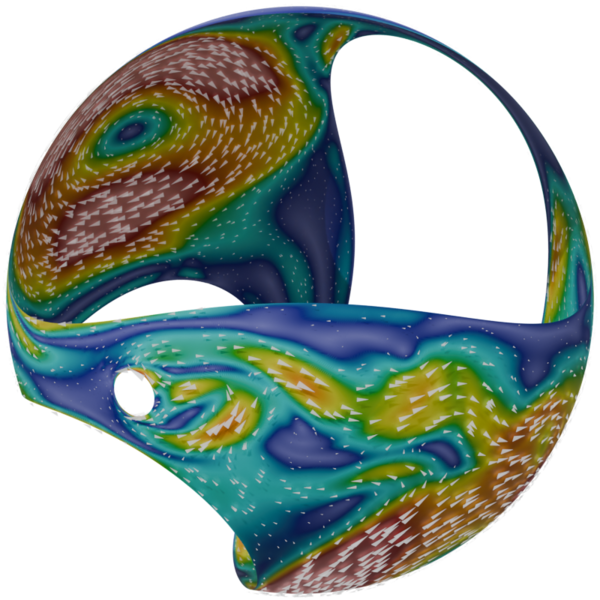}&
        \includegraphics[width=0.16\textwidth]{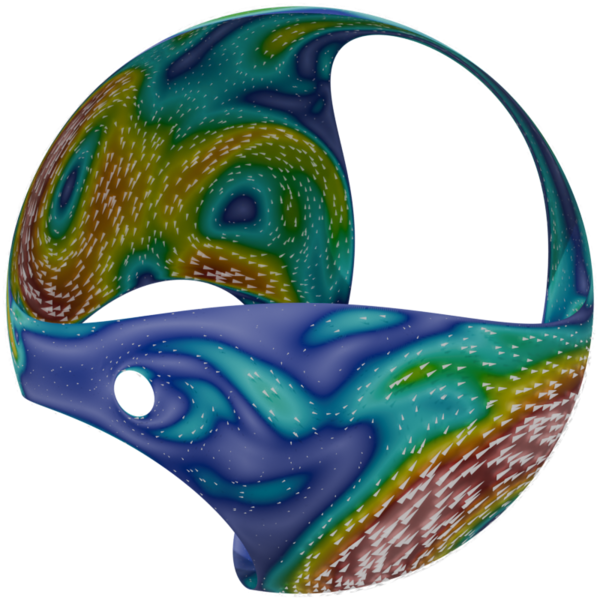}&
        \includegraphics[width=0.16\textwidth]{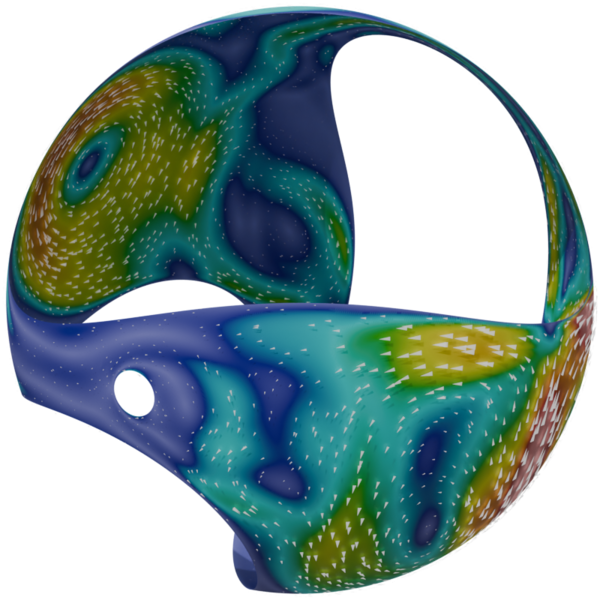}&
        \includegraphics[width=0.16\textwidth]{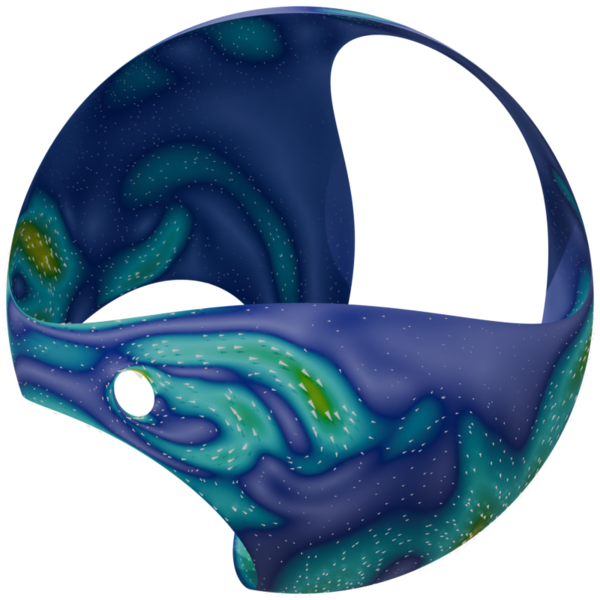}&
        \includegraphics[width=0.16\textwidth]{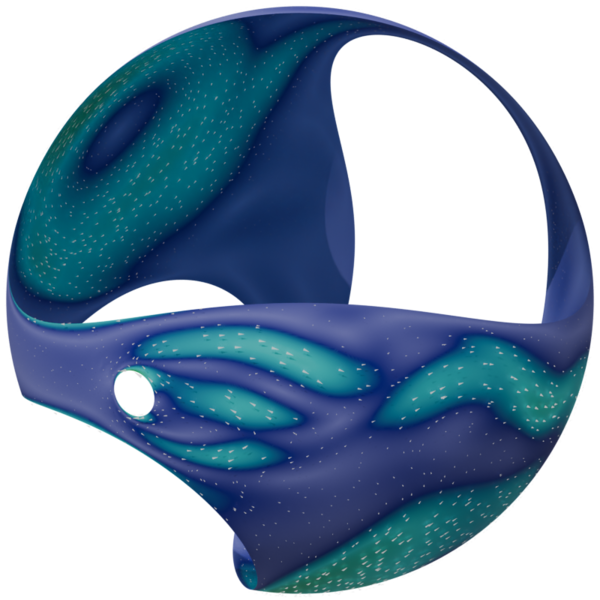}
        \\[-1ex]
        \rotatebox{90}{\begin{minipage}{2.2cm}\centering\small $\uharm$\end{minipage}}&
        \includegraphics[width=0.16\textwidth]{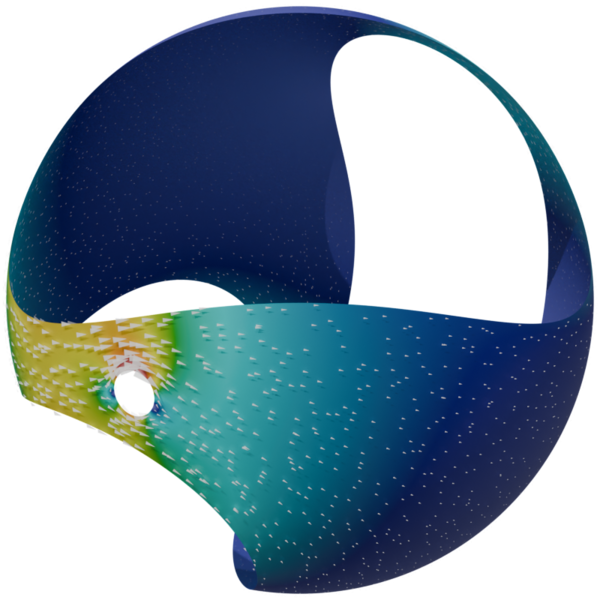}&
        \includegraphics[width=0.16\textwidth]{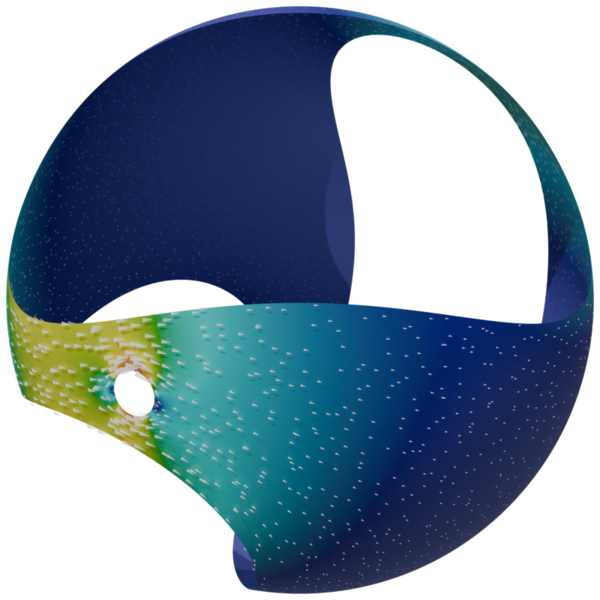}&
        \includegraphics[width=0.16\textwidth]{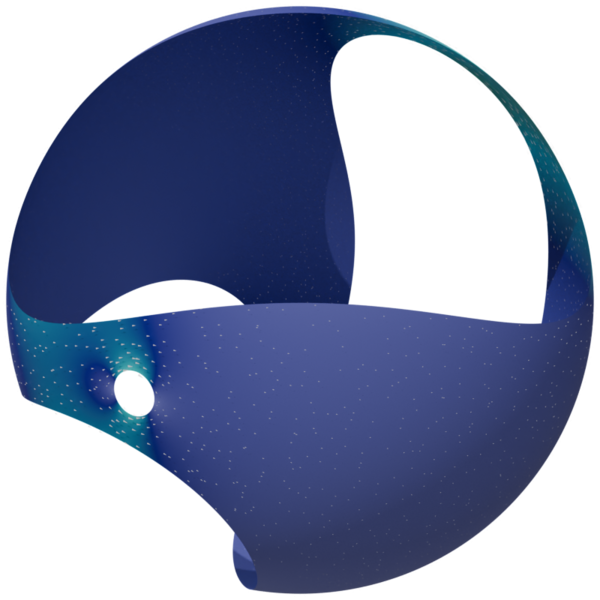}&
        \includegraphics[width=0.16\textwidth]{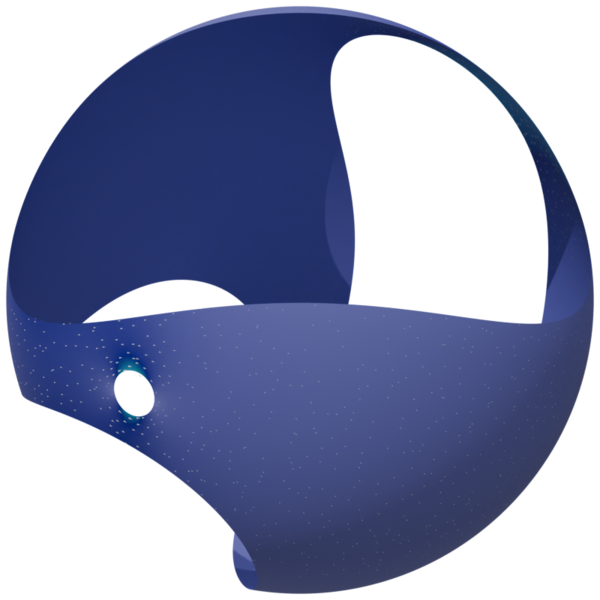}&
        \includegraphics[width=0.16\textwidth]{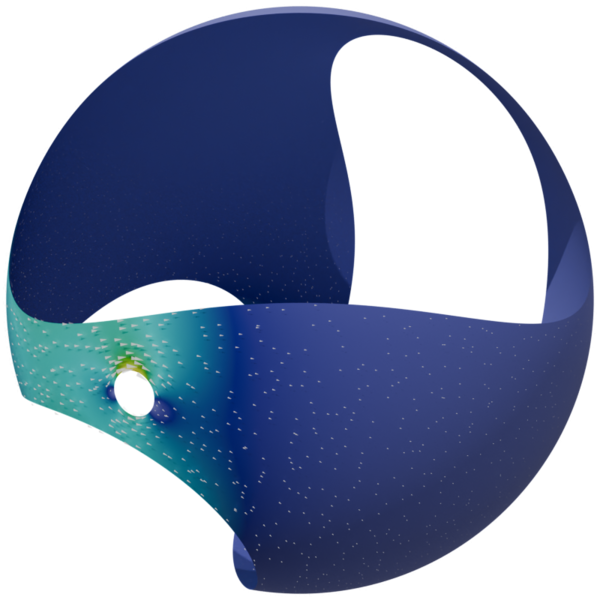}&
        \includegraphics[width=0.16\textwidth]{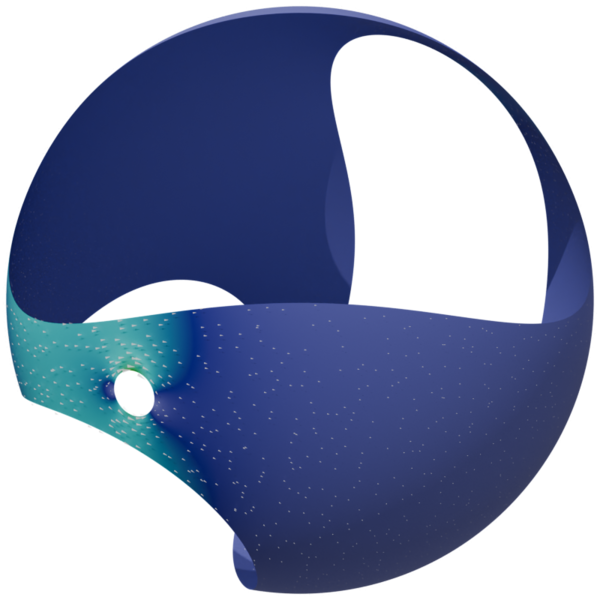}
    \end{tabular}
    \hspace*{0.65\textwidth} \small$0$ \raisebox{1.75ex}{\rotatebox{270}{\includegraphics[width=0.02\textwidth]{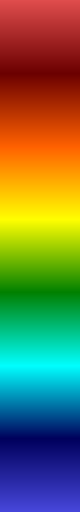}}} $2$
    \vspace*{-0.3cm}
    \caption{Snapshots of the surface Navier--Stokes flow on the pierced sculpture surface at times $t=0,\,1.25,\,5,\,10,\,20,\,40$. Each column shows the total velocity $\bm{u}_h$ (top), the rotational part $\urot = \rot(\psi)$ (middle), and the harmonic part $\uharm$ (bottom). Colors indicate velocity magnitude, scaled from $0$ to $2$. A full animation is available in \cite{grodata} and at \href{https://www.youtube.com/watch?v=fPq8WqddI7M}{\texttt{youtu.be/fPq8WqddI7M}}.}    \label{fig:sculpturesurf}
\end{figure}

We use polynomial degree and geometric approximation order $k=4$ on an isoparametric mesh of $3908$ curved surface triangles with approximate mesh size $h \approx 0.066$.
The streamfunction formulation yields $24,\!928$ streamfunction \dofs~plus $3$ harmonic \dofs. 
For time integration we use the time-step size $\Delta t = 5\times10^{-5}$.

\Cref{fig:sculpturesurf} shows six snapshots of the full velocity $\u$, the rotational part $\urot = \rot(\psi)$, and the harmonic part $\uharm \in \HBDM$ at times $t \in \{0,\, 1.25,\, 5,\, 10,\, 20,\, 40\}$.

We observe three distinct phases of the flow evolution.
Two large vortices form on the two largest free areas of the sphere and exchange flow through the small connecting channels between the cylindrical holes.
Immediately downstream of the small hole, a K\'arm\'an vortex street develops.
During an initial transient phase ($t \lesssim 2$), the flow reorganizes from the Stokes initial condition; this reorganization continues in a less structured form until approximately $t \approx 20$.
For $t > 20$, the flow reaches a quasi-stationary configuration in which the regular K\'arm\'an vortex street behind the small hole is the only remaining fluctuation.

As before, the space decomposition is useful to understand the flow evolution.
The harmonic part $\uharm$ is most significant in two regimes: during the initial transient starting from the Stokes state and in the final quasi-periodic regime with $t > 20$.
Spatially, the harmonic contribution is concentrated on the smallest hole.
In the intermediate reorganization phase ($2 \lesssim t \lesssim 20$), the harmonic field is less important, and the rotational part $\urot = \rot(\psi)$ dominates throughout the simulation, particularly during the reorganization phase ($t < 20$).
For $t > 20$, both the harmonic and rotational contributions become significant, especially in the vicinity of the small hole where the vortex street forms.

\bibliography{literatur}
\bibliographystyle{plain}

\appendix
\section{Abstract Hilbert complexes}\label{sec:HilbertComplexes}
The ideas discussed in this manuscript fit into the abstract framework of Hilbert complexes and finite element exterior calculus (FEEC) as discussed, for instance, in \cite{BL92,AFW06,AFW10,HS12,A18FEEC,H25}. For context, we provide a brief and necessarily incomplete overview relating \Cref{sec:harmonic} to the broader context, and refer to the above references for further details.

%
We say that a sequence $(V^k,A^k)$ forms a \emph{closed Hilbert complex}, if $V^k$ are Hilbert spaces and $A^k : V^{k} \to V^{k+1}$ are closed, densely defined linear operators such that $\ran(A^{k}) \subset \ker(A^{k+1})$ and $\ran(A^{k}) \subset V^{k+1}$ is closed. The $k$-th cohomology space is defined as $\mathcal{H}^k \coloneqq \ker(A^k) / \ran(A^{k-1})$ and the space of harmonic $k$-forms as $\mathfrak{H}^k \coloneqq \ker(A^k) \cap \ran(A^{k-1})^\perp$. A classical result is that the harmonic $k$-forms realize the cohomology of the complex in the sense that $\mathcal{H}^k \cong \mathfrak{H}^k$. Furthermore, the abstract Helmholtz--Hodge decomposition reads as 
\begin{align}\label{eq:AbstractHodge}
    V^k = \ker(A^k) \osum \ker(A^k)^{\perp} = \ran(A^{k-1}) \osum \mathfrak{H}^k \osum \ker(A^k)^{\perp}.
\end{align}
In the setting of \Cref{sec:harmonic}, we have that $A^{k-1} = \rot$, $A^k = \div$, and $\mathfrak{H}^k = \HBDM$. 
Considering the discretization of the Hilbert complex $(V^k,A^k)$, the crucial property to ensure is \emph{cohomology preservation}, i.e. that the spaces of discrete harmonic forms $\mathfrak{H}^k_h$ are isomorphic to the non-discrete ones. Assuming that $(V_h^k,A^k)$, $V_h^k \subset V^k$, is a finite dimensional subcomplex, where the spaces $V_h^k$ fulfill an approximation property, the missing property is the existence of \emph{bounded cochain projections} $\pi_h^k : V^k \to V_h^k$. This means that $\pi_h^k$ is bounded with respect to the $\Vert \cdot \Vert_{V^k}$-norm and the following diagram commutes:
\begin{equation}
    \begin{tikzcd}
    V^{k-1} \arrow[r,"A^{k-1}"] \arrow[d,"\pi_h^{k-1}"] & V^k \arrow[r,"A^k"] \arrow[d,"\pi_h^k"] & V^{k+1} \arrow[d,"\pi_h^{k+1}"]\\
    V_h^{k-1} \arrow[r,"A^{k-1}"] & V_h^k \arrow[r,"A^k"] & V_h^{k+1}
    \end{tikzcd}
\end{equation}
If these conditions are fulfilled, the discrete cohomology is isomorphic to the continuous one, and consequently the Galerkin approximation is consistent, stable, and converges with optimal order. Further, the Helmholtz--Hodge decomposition \eqref{eq:AbstractHodge} holds on the discrete level. 

\subsection{Randomized construction of the harmonic basis for abstract Hilbert complexes} \label{ssec:RandomizedHarmonicAbstractHilbertComplexes}
We can generalize the procedure described in \Cref{sec:harmonic:algo} in the setting of abstract Hilbert complexes. 
To be precise, we may determine a basis for the harmonic $k$-forms $\mathfrak{H}_k \subset V_h^k$ analogously to \Cref{alg:harmonic}. After choosing a random element $r_h \in V_h^{k}$, we project onto the kernel $u_h \coloneqq \Pi_0 r_h \in \ker A^k \subset V_h^{k}$ and compute 
$\psi_h \in V_h^{k-1}$ as the solution to 
\begin{align*}
    (A^{k-1} \psi_h, A^{k-1} \phi_h) = (u_h, A^{k-1} \phi_h) \qquad \forall \phi_h \in V_h^{k-1}
\end{align*}
Then, we set $w_h \coloneqq u_h - A^{k-1} \psi_h$ and orthogonalize.
 \section{Dimensional analysis of discrete harmonic fields}
\label{app:counting}
In this section, we verify that the discrete $\mathds{BDM}$ complex satisfy the topological requirements necessary to preserve the cohomology of the continuous complex. We further discuss possible discretizations of the $H^2$-complex in the planar case, as already discussed in \Cref{rem:H2complex}.

Let $(\mathcal{V}, \mathcal{E}, \mathcal{T})$ denote the sets of vertices, edges, and triangles of the discrete surface $M_h$.
We partition the vertices and edges into interior ($\mathcal{V}_{\mathcal{I}}, \mathcal{E}_{\mathcal{I}}$) and boundary ($\mathcal{V}_{\Gamma_h}, \mathcal{E}_{\Gamma_h}$) subsets. 
Since the boundary consists of closed loops, we have $|\mathcal{V}_{\Gamma_h}| = |\mathcal{E}_{\Gamma_h}|$. Furthermore, the Euler characteristic is given by $\chi(M) = |\mathcal{V}| - |\mathcal{E}| + |\mathcal{T}|$.

\subsection{The $\mathds{BDM}$-complex}
The following result establishes that the space $\mathds{H}_{\mathds{BDM}}^k$ correctly captures the topology of the manifold, consistent with \cite[Prop. 2.2]{KanschatSharma2014}.

\begin{xthm}[Dimension of $\mathds{H}_{\mathds{BDM}}^k$]
\label{thm:countBDM}
    The space of discrete harmonic fields $\mathds{H}_{\mathds{BDM}}^k$ satisfies $\mathrm{dim}\left(\mathds{H}_{\mathds{BDM}}^k\right) = b_1(M)$.
\end{xthm}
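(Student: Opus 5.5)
Since $\HBDM$ is the $L^2(M_h)$-orthogonal complement of $\rot(\mathbb{S}^{k+1}_0)$ in $\JBDM$, the plan is to compute
\[
\dim\HBDM=\dim\JBDM-\dim\rot(\mathbb{S}^{k+1}_0)
\]
by pure dimension counting with the triangulation data $(\mathcal{V},\mathcal{E},\mathcal{T})$ introduced above.

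\textbf{Step 1: the streamfunction part.} The operator $\rot$ is injective on $\mathbb{S}^{k+1}_0$. Indeed, $\rot\phi_h=0$ forces $\phi_h$ to be constant on the connected surface. The constant is then fixed to zero either by the vanishing boundary trace or, for $\Gamma=\emptyset$, by the zero-mean condition. Hence $\dim\rot(\mathbb{S}^{k+1}_0)=\dim\mathbb{S}^{k+1}_0$. Counting Lagrange degrees of freedom (one per interior vertex, $k$ per interior edge, $\binom{k}{2}$ per triangle) gives
\[
\dim\mathbb{S}^{k+1}_0=|\mathcal{V}_{\mathcal{I}}|+k|\mathcal{E}_{\mathcal{I}}|+\tbinom{k}{2}|\mathcal{T}|,
\]
with $|\mathcal{V}_{\mathcal{I}}|$ replaced by $|\mathcal{V}|-1$ in the closed case.

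\textbf{Step 2: the divergence-free part.} By rank--nullity, $\dim\JBDM=\dim\BDM^k_0-\dim\div(\BDM^k_0)$. The BDM degrees of freedom yield
\[
\dim\BDM^k_0=(k+1)|\mathcal{E}_{\mathcal{I}}|+(k^2-1)|\mathcal{T}|.
\]
The key claim is that $\div:\BDM^k_0\to\mathbb{P}_{\!\mathcal{J}}^{k-1}$ is surjective in the case $\Gamma\neq\emptyset$, and surjective onto the zero-mean subspace for closed surfaces. I would prove this by showing that the discrete adjoint $\addiv$ is injective. Suppose $q\in\mathbb{P}_{\!\mathcal{J}}^{k-1}$ satisfies $(q,\div\v_h)=0$ for all $\v_h\in\BDM^k_0$.
\begin{itemize}
\item Testing with the interior BDM moment functions of each element (pulled back to $\hat T$ via the Piola map, under which $\mathcal{J}_T\div$ commutes with $\hat\div$) shows that $\hat q$ is elementwise constant.
\item Testing with the lowest-order Raviart--Thomas edge functions then shows that $q$ is equal on neighbouring triangles, so $q$ is globally constant.
\item Testing with the RT function of a boundary edge (available when $\Gamma\neq\emptyset$, where the constraint is only $\v_h\cdot\bm{\nu}_\Gamma=0$) forces $q=0$. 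For closed surfaces, the zero-mean normalization does the same.
\end{itemize}
This step is where most care is needed, especially the curved Piola bookkeeping. Granting it,
\[
\dim\div(\BDM^k_0)=\tfrac{k(k+1)}{2}|\mathcal{T}|-\delta,
\]
where $\delta=1$ for closed $M$ and $\delta=0$ otherwise.

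\textbf{Step 3: Euler characteristic.} Subtracting, the $k$-dependent terms cancel: $(k+1)-k=1$ per interior edge, and $(k^2-1)-\tfrac{k(k+1)}{2}-\tfrac{k(k-1)}{2}=-1$ per triangle. For $\Gamma\neq\emptyset$ this leaves
\[
\dim\HBDM=|\mathcal{E}_{\mathcal{I}}|-|\mathcal{T}|-|\mathcal{V}_{\mathcal{I}}|.
\]
Using $|\mathcal{V}_{\Gamma_h}|=|\mathcal{E}_{\Gamma_h}|$, this equals $-\chi(M)=b_1(M)-1$. That is one short, so the boundary-vertex freedom must be revisited.

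The source of the discrepancy is the boundary condition on the streamfunction. The constraint $\v_h\cdot\bm{\nu}_\Gamma=0$ only forces $\phi_h$ to be constant on each boundary component. The paper's space $\mathbb{S}^{k+1}_0$, which uses zero trace, therefore misses one constant per component after the first. So in the main obstacle I expect the answer to hinge on the exact convention: whether $H^1_0(M_h)$ and $\mathbb{S}^{k+1}_0$ mean vanishing trace on all of $\Gamma$. With vanishing trace, the count becomes
\[
\dim\HBDM=-\chi(M)+\delta_\Gamma,
\]
where $\delta_\Gamma=1$ if $\Gamma\neq\emptyset$ and $\delta_\Gamma=0$ otherwise. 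This should be compared with $b_1=2g+m-1$ in the bounded case, where $m$ is the number of boundary components. I would carefully recheck the boundary-vertex bookkeeping there, since the sculpture example ($g=0$, $m=4$, $\dim=3$) confirms the intended identity $b_1=1-\chi$ for bounded surfaces, $1-\chi=1-(2-2g-m)$.

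For closed surfaces, the counts are $|\mathcal{E}|-|\mathcal{T}|-(|\mathcal{V}|-1)-1$ minus the $\delta$ term from Step 2, which gives $2-\chi=2g=b_1$. The bounded case is then handled by tracking the $+1$ from the absent mean constraint in $\div(\BDM^k_0)$.
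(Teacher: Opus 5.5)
Your overall route is the paper's: subtract $\dim\rot(\mathbb{S}^{k+1}_0)$ from $\dim\JBDM$ and convert to $\chi(M)$. Your Step~1 agrees with the paper's count, and the closed case comes out right. Step~2, however, fails for $\Gamma\neq\emptyset$. Your third bullet removes the constant mode by testing with the lowest-order Raviart--Thomas function of a boundary edge. That function has nonzero normal flux through $\Gamma_h$, so it is \emph{not} in $\BDM^k_0$; consistently, your own formula $\dim\BDM^k_0=(k+1)|\mathcal{E}_{\mathcal{I}}|+(k^2-1)|\mathcal{T}|$ contains no boundary-edge moments. In fact no admissible test function can remove this mode: for every $\v_h\in\BDM^k_0$ the divergence theorem gives $\int_{M_h}\div\v_h\,\mathrm{d}x=\int_{\Gamma_h}\v_h\cdot\bm{\nu}_\Gamma\,\mathrm{d}s=0$. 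Hence $\div(\BDM^k_0)$ is the zero-mean subspace of $\mathbb{P}_{\!\mathcal{J}}^{k-1}$ in \emph{both} cases, i.e.\ $\dim\div(\BDM^k_0)=\tfrac{k(k+1)}{2}|\mathcal{T}|-1$ regardless of $\Gamma$. This is exactly the paper's $\dim\JBDM=(k+1)|\mathcal{E}_{\mathcal{I}}|+\tfrac{k(k-1)}{2}|\mathcal{T}|-(|\mathcal{T}|-1)$. With this correction your Step~3 gives $|\mathcal{E}_{\mathcal{I}}|-|\mathcal{V}_{\mathcal{I}}|-|\mathcal{T}|+1+\delta_{\Gamma,\emptyset}=1+\delta_{\Gamma,\emptyset}-\chi(M)=b_1(M)$ uniformly, and that $+1$ is your ``missing one''.

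Your diagnosis of the discrepancy points the wrong way. Vanishing trace on all of $\Gamma$ is the right convention. If $\phi$ is harmonic and constant on each boundary component, then $\rot\phi$ has vanishing normal component on $\Gamma$, is divergence- and curl-free, and is $L^2$-orthogonal to $\rot(H^1_0(M))$. So it lies in $\Hn$ and is part of what $b_1(M)=2g+m-1$ counts. Adding per-component constants to $\mathbb{S}^{k+1}_0$ would move these $m-1$ fields into the rotational part and leave only $2g$ harmonic dimensions. Likewise, your closing sentence credits the $+1$ to an \emph{absent} mean constraint on $\div(\BDM^k_0)$, whereas it comes from that constraint being \emph{present} for bounded surfaces too. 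As written, the bounded-case formula $-\chi(M)+\delta_\Gamma$ is asserted without derivation and contradicts your own computation, so that case is not proved.

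A minor remark on the curved bookkeeping you flagged. Under the $L^2(M_h)$ pairing, the interior-moment test does not make $\hat q$ elementwise constant, because the weight $\mathcal{J}_T^{-1}$ enters. The rank count is unaffected if you work after Piola pullback, where $\mathcal{J}_T\,(\div\v_h)\circ\Phi_T$ is the reference divergence of $\hat{\v}_h$ and $\int_T q\,\mathrm{d}x=\int_{\hat T}\hat q\,\mathrm{d}\hat x$. There your first two bullets go through verbatim.
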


\begin{proof}
    By construction, it holds that $\mathrm{dim}(\mathds{H}_{\mathds{BDM}}^k) = \mathrm{dim}(\mathds{J}_{\mathds{BDM}}^k)-\mathrm{dim}(\rot(\mathbb{S}^{k+1}_0))$.
    The dimensions of these contributions are
    \begin{align*}
        \mathrm{dim}(\mathds{J}_{\mathds{BDM}}^k)&=(k+1)|\mathcal{E}_{\mathcal{I}}| + \frac{k(k-1)}{2}|\mathcal{T}|-(|\mathcal{T}|-1), \\
        \mathrm{dim}(\rot(\mathbb{S}^{k+1}_0)) &= |\mathcal{V}_{\mathcal{I}}| + k|\mathcal{E}_{\mathcal{I}}|+\frac{k(k-1)}{2}|\mathcal{T}|-\delta_{\Gamma, \emptyset},
    \end{align*}
    where $\delta_{\Gamma,\emptyset}=1$ if $\Gamma=\emptyset$ and $0$ otherwise. 
    Subtracting these dimensions, and using $|\mathcal{E}_{\Gamma_h}|=|\mathcal{V}_{\Gamma_h}|$ along with the definition of $\chi(M)$, yields
    \begin{align*}
        \mathrm{dim}(\mathds{H}_{\mathds{BDM}}^k) &= |\mathcal{E}_{\mathcal{I}}| - |\mathcal{V}_{\mathcal{I}}| - |\mathcal{T}| + 1 + \delta_{\Gamma, \emptyset}= 1 + \delta_{\Gamma, \emptyset} - \chi(M).
    \end{align*}
    Finally, employing the Euler--Poincaré formula $\chi(M)=b_0-b_1+b_2$ and observing that for a connected surface $b_0=1$ and $b_2=\delta_{\Gamma, \emptyset}$, the result follows immediately.
\end{proof}

\subsection{The $\mathds{SV}$-complex}
\label{sec:SVcomplex}
We restrict our analysis to the planar setting $M_h \subset \mathbb{R}^2$, assuming that $M_h$ contains no singular vertices. Analogously to the $\BDM$ construction, we define the space of divergence-free velocity fields as $\mathds{J}_{\mathds{SV}}^k\coloneqq \mathds{SV}^k_0 \cap \mathrm{ker}(\mathrm{div})$, where $\mathds{SV}^k_0$ is the Scott-Vogelius finite element space of degree $k$, with vanishing normal component on $\Gamma_h$.

For $k \geq 4$, the divergence-free subspace of $\mathds{SV}^k_0$ has the dimension 
\begin{align*}
    \mathrm{dim}(\mathds{J}_{\mathds{SV}}^k)&=\vert\mathcal{V}\vert+\vert\mathcal{V}_{\mathcal{I}}\vert+(k-1)(\vert \mathcal{E}\vert +\vert \mathcal{E}_{\mathcal{I}}\vert)+(k-1)(k-2)\vert \mathcal{T}\vert-\left(\frac{k(k+1)}{2}\vert \mathcal{T}\vert-1\right). 
\end{align*}

As noted in \Cref{rem:H2complex}, the Argyris element $\mathbb{A}_0^{k+1}$ with zero boundary trace is unsuitable as a streamfunction space for these velocity fields. This incompatibility can be established simply by calculating the degrees of freedom for the Argyris space with a vanishing boundary trace, which yields:
\begin{align*}
    \mathrm{dim}(\mathrm{rot}(\mathbb{A}_0^{k+1})) =& \hphantom{+} 3\vert\mathcal{V}\vert+3\vert \mathcal{V}_{\mathcal{I}}\vert+(k-3)\vert \mathcal{E}\vert+(k-4)\vert \mathcal{E}_{\mathcal{I}}\vert+\frac{(k-3)(k-4)}{2} \vert \mathcal{T}\vert.
\end{align*}
Investigating the difference $\mathrm{dim}(\mathbb{J}_{\mathbb{SV}}^k)-\mathrm{dim}(\mathrm{rot}(\mathbb{A}_0^{k+1}))$, we observe that it scales with $-4\vert \mathcal{V}_{\mathcal{I}}\vert$, neglecting lower-order boundary contributions.

To correctly capture the topology of the domain, this dimension difference must relate to the Euler characteristic $\chi(M_h) = \vert \mathcal{V}\vert - \vert \mathcal{E}\vert + \vert \mathcal{T}\vert$. While a contribution of $-1\vert \mathcal{V}_{\mathcal{I}}\vert$ is strictly necessary to reconstruct the $\chi(M_h)$ term, the Argyris element leaves an unresolvable excess of $-3\vert \mathcal{V}_{\mathcal{I}}\vert$. 

Indeed, this obstruction is not merely theoretical; for simple triangulations, one can explicitly calculate the dimension difference and observe that it fails to match the Betti number $b_1(M)$. Consequently, the Argyris element leads to a discrete complex that incorrectly captures the cohomology of the manifold.

\paragraph{The Hsieh--Clough--Tocher element} To correctly define the subspace of harmonic fields within $\mathds{J}_{\mathds{SV}}^k$, we employ the space $\mathbb{HCT}^{k+1}_0$ with zero boundary trace (where $k\geq 2$), defined on a Clough--Tocher (Alfeld) split.

Since $\rot(\mathbb{HCT}^{k+1}_0) \subset \mathds{J}_{\mathds{SV}}^k$, we define the space of discrete harmonic fields $\mathds{H}_{\mathds{SV}}^k$ as the orthogonal complement of $\rot(\mathbb{HCT}_0^{k+1})$ within the velocity space
\begin{equation*}
    \mathds{H}_{\mathds{SV}}^k \coloneqq \left(\rot(\mathbb{HCT}^{k+1}_0)\right)^\perp \subset \mathds{J}_{\mathds{SV}}^k,
\end{equation*}
where the orthogonality is defined with respect to the $L^2(M_h)$ inner product.

The fact that $\mathds{H}_{\mathds{SV}}^k$ correctly captures the topology of $M$ is established by the following theorem.

\begin{xthm}[Dimension of $\mathds{H}_{\mathds{SV}}^k$]
\label{thm:countHCT}
    The space of discrete harmonic fields $\mathds{H}_{\mathds{SV}}^k$ satisfies $\mathrm{dim}\left(\mathds{H}_{\mathds{SV}}^k\right)=b_1(M)$.
\end{xthm}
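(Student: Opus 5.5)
The plan mirrors the proof of \Cref{thm:countBDM}. By definition of $\mathds{H}_{\mathds{SV}}^k$ as an orthogonal complement inside $\mathds{J}_{\mathds{SV}}^k$,
\[
\dim(\mathds{H}_{\mathds{SV}}^k)=\dim(\mathds{J}_{\mathds{SV}}^k)-\dim(\rot(\mathbb{HCT}^{k+1}_0)),
\]
so it suffices to count both terms and reduce the difference to the Euler characteristic. All spaces are understood on the Clough--Tocher refinement $\mathcal{T}^{\mathrm{CT}}$ of $\mathcal{T}$, i.e.\ the Scott--Vogelius velocities are taken on the split mesh as well. The counts of the refinement are
\[
|\mathcal{V}^{\mathrm{CT}}|=|\mathcal{V}|+|\mathcal{T}|,\qquad |\mathcal{E}^{\mathrm{CT}}|=|\mathcal{E}|+3|\mathcal{T}|,\qquad |\mathcal{T}^{\mathrm{CT}}|=3|\mathcal{T}|,
\]
and the interior counts increase in the same way. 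Hence $\chi$ is unchanged and the refinement introduces no singular vertices.

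\textbf{Step 1 (streamfunction space).} Since $\Gamma\neq\emptyset$ in the planar setting, $\rot$ is injective on $\mathbb{HCT}^{k+1}_0$: a function with vanishing gradient is constant, and its zero boundary trace forces it to vanish. So $\dim\rot(\mathbb{HCT}^{k+1}_0)=\dim\mathbb{HCT}^{k+1}_0$. I would take the latter from the known local degrees of freedom of the Hsieh--Clough--Tocher element of degree $k+1\ge 3$, namely vertex values and gradients, edge values and normal derivatives, and interior moments. I would then subtract the boundary constraints (vertex values, tangential derivatives, edge value moments) to get an explicit formula in $|\mathcal{V}|,|\mathcal{V}_{\mathcal{I}}|,|\mathcal{E}|,|\mathcal{E}_{\mathcal{I}}|,|\mathcal{T}|$.

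\textbf{Step 2 (divergence-free velocities).} I would use the fact that on Alfeld splits the Scott--Vogelius pair is inf-sup stable for $k\ge 2$, with $\div\colon \mathds{SV}^k_0\to \mathbb{P}^{k-1}(\mathcal{T}^{\mathrm{CT}})\cap L^2_0$ surjective (Arnold--Qin, Guzmán--Neilan). This gives
\[
\dim\mathds{J}_{\mathds{SV}}^k=\dim\mathds{SV}^k_0-\Bigl(\tfrac{k(k+1)}{2}\,3|\mathcal{T}|-1\Bigr).
\]
The Lagrange count of $\mathds{SV}^k_0$ on $\mathcal{T}^{\mathrm{CT}}$ is two components per node, minus one normal component per boundary node. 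It must be done carefully at boundary vertices, where the normal is not defined and both components have to be fixed so as to match the trace conditions of $\rot(\mathbb{HCT}^{k+1}_0)$.

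\textbf{Step 3 (collecting terms).} Subtracting, all interior-triangle and interior-edge contributions cancel. Using $|\mathcal{V}_{\Gamma_h}|=|\mathcal{E}_{\Gamma_h}|$, the difference should reduce to
\[
\dim(\mathds{H}_{\mathds{SV}}^k)=|\mathcal{E}_{\mathcal{I}}|-|\mathcal{V}_{\mathcal{I}}|-|\mathcal{T}|+1=1-\chi(M).
\]
With $b_0=1$, $b_2=0$ and the Euler--Poincaré formula this equals $b_1(M)$.

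I expect the main obstacle to be getting the boundary bookkeeping exactly consistent in Steps 1--2, i.e.\ which normal/tangential components and derivatives are constrained at boundary vertices in both spaces. The rest, including surjectivity of $\div$, is imported from the literature. As a sanity check I would verify the count on a single macro triangle (expect $0$) and on an annular mesh (expect $1$).
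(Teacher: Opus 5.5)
Your plan is correct and follows the paper's proof. Both compute $\dim(\mathds{J}_{\mathds{SV}}^k)$ and $\dim(\rot(\mathbb{HCT}^{k+1}_0))$ by counting degrees of freedom on the Clough--Tocher refinement (the paper quotes these counts from Douglas--Dupont--Percell--Scott), subtract to obtain $|\mathcal{E}_{\mathcal{I}}|-|\mathcal{V}_{\mathcal{I}}|-|\mathcal{T}|+1=1-\chi(M)$, and conclude via Euler--Poincar\'e with $b_0=1$, $b_2=0$. Your explicit justifications are exactly what the paper's quoted formulas implicitly rely on: injectivity of $\rot$, surjectivity of $\div$ onto $\mathbb{P}^{k-1}(\mathcal{T}^{\mathrm{CT}})\cap L^2_0$ on Alfeld splits, and the fact that both spaces lose the same one degree of freedom at polygon corners, so the difference is unaffected.
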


\begin{proof}
     By construction, it holds that $\mathrm{dim}(\mathds{H}_{\mathds{SV}}^k)=\mathrm{dim}(\mathds{J}_{\mathds{SV}}^k)-\mathrm{dim}(\rot(\mathbb{HCT}_0^{k+1}))$.
    Counting the \dofs \ on the underlying macro-elements, the dimensions of these contributions are \cite{DouglasDupontPercellScott79}: 
    \begin{align*}
        \mathrm{dim}(\mathds{J}_{\mathds{SV}}^k)=&(\vert\mathcal{V}\vert+\vert\mathcal{V}_{\mathcal{I}}\vert+2\vert \mathcal{T}\vert)+(k-1)(\vert \mathcal{E}\vert+\vert \mathcal{E}_{\mathcal{I}}\vert+6\vert \mathcal{T}\vert)
        \\&+(k-1)(k-2)\cdot3\vert \mathcal{T}\vert-\left(\frac{k(k+1)}{2}\cdot3\vert \mathcal{T}\vert-1\right),
        \\\mathrm{dim}(\rot(\mathbb{HCT}_0^{k+1}))=&\vert \mathcal{V}\vert+2\vert \mathcal{V}_{\mathcal{I}}\vert+(k-1)\vert \mathcal{E}\vert+(k-2)\vert\mathcal{E}_{\mathcal{I}}\vert+3\frac{(k-1)(k-2)}{2}\vert \mathcal{T}\vert.
    \end{align*}
    Subtracting these dimensions, and grouping the terms by mesh entity, yields
    \begin{align*}
         \mathrm{dim}(\mathds{J}_{\mathds{SV}}^k)-\mathrm{dim}(\rot(\mathbb{HCT}_0^{k+1}))=&-\vert \mathcal{V}_{\mathcal{I}}\vert+\vert \mathcal{E}_{\mathcal{I}}\vert+1
         \\&+\left(2+6(k-1)+3\frac{(k-1)(k-2)}{2}-3\frac{k(k+1)}{2}\right)\vert \mathcal{T}\vert
        \\=&-\vert \mathcal{V}\vert+\vert \mathcal{E}\vert-\vert \mathcal{T}\vert+1=1-\chi(M).
    \end{align*}
    For the final steps we use the relation $\vert \mathcal{E}_{\Gamma_h}\vert=\vert \mathcal{V}_{\Gamma_h}\vert$ and the definition of $\chi(M)$.
    Finally, employing the Euler--Poincaré formula $\chi(M)=b_0-b_1+b_2$ and observing that for a connected surface with boundary $b_0=1$ and $b_2=0$, the result follows immediately. Similar computations for simply connected domains can be found in \cite[Sec.~4.3]{JLMNR17}.
\end{proof} 
\end{document}